
\def\version{29/07/2013 version 6 (to appear in
\emph{Journal of Homotopy and Related Structures})
\hfill
\href{http://arxiv.org/abs/1204.4878}{arXiv:1204.4878}}

\documentclass[11pt,a4paper]{amsart}
\usepackage{fullpage}

\usepackage{amsmath,amssymb,amscd,amsxtra,upref}

\usepackage{fixltx2e}

\usepackage[numeric,lite]{amsrefs}
\usepackage{hyperref}
\usepackage[all]{xy}
\usepackage{mathrsfs}
\usepackage{stmaryrd}

\usepackage{pigpen}
\def\PO{\text{\pigpenfont R}}
\def\PB{\text{\pigpenfont J}}

\theoremstyle{plain}
\newtheorem{thm}{Theorem}[section]
\newtheorem{lem}[thm]{Lemma}
\newtheorem{prop}[thm]{Proposition}
\newtheorem{cor}[thm]{Corollary}

\theoremstyle{definition}
\newtheorem{rem}[thm]{Remark}

\numberwithin{equation}{section}

\hyphenation{hom-o-l-ogy hom-o-l-og-ies co-hom-o-l-og-ies
co-hom-o-l-ogy homo-mor-ph-ism mono-mor-ph-ism epi-mor-ph-ism
iso-mor-ph-ism co-mod-ule bi-co-mod-ule Hur-e-wicz alg-e-br-oid
co-ten-s-or Hat-t-ori co-ass-oc-ia-tiv-ity John-son--Wil-son
co-fin-al}


\def\ie{i.e.}
\def\ds{\displaystyle}
\def\:{\colon}
\def\.{\cdot}

\def\<{\left\langle}
\def\>{\right\rangle}
\def\({\left(}
\def\){\right)}
\def\ph#1{\phantom{#1}}
\def\epsilon{\varepsilon}
\def\leq{\leqslant}
\def\geq{\geqslant}

\def\lra{\longrightarrow}
\def\Lra{\Longrightarrow}
\def\LRA{{\ \Lra\ }}

\def\tilde#1{\widetilde{#1}}
\def\iso{\cong}
\def\emptyset{\varnothing}

\DeclareMathOperator{\im}{im}

\def\F{\mathbb{F}}

\def\Q{\mathbb{Q}}
\def\N{\mathbb{N}}

\def\Z{\mathbb{Z}}

\DeclareMathOperator{\Ext}{Ext}

\DeclareMathOperator{\inc}{inc}
\DeclareMathOperator{\Tor}{Tor}
\DeclareMathOperator{\tors}{tors}

\DeclareMathOperator*{\colim}{colim}
\DeclareMathOperator*{\hocolim}{hocolim}

\def\phi{\varphi}

\DeclareMathOperator{\TP}{TP}

\DeclareMathOperator{\exc}{exc}

\def\Kriz{K\v{r}\'{\i}\v{z}}

\newdir{ >}{{}*!/-8pt/@{>}}

\begin{document}

\title[$BP$: Close encounters of the $E_\infty$ kind]
{\boldmath{$BP$}\,: Close encounters of the $E_\infty$ kind}
\author{Andrew Baker}
\email{a.baker@maths.gla.ac.uk}
\address{
School of Mathematics \& Statistics,
University of Glasgow,
Glasgow G12 8QW,
Scotland.}
\urladdr{http://www.maths.gla.ac.uk/$\sim$ajb}
\subjclass[2010]{Primary 55N20; Secondary 55N22 55S12 55S15}
\keywords{$E_\infty$ ring spectrum; Brown-Peterson
spectrum; power operation}
\thanks{The author would like to thank Bob Bruner, Mike
Mandell, Peter May, Birgit Richter, John Rognes and Markus
Szymik; special thanks are due to Tyler Lawson who pointed
out the usefulness of coning off Moore spectra rather than
spheres. Finally, we note that this paper would not exist
without the inspiration provided by Stewart Priddy's elegant
cellular construction of the Brown-Peterson spectrum. \\
This research was supported by funding from RCUK}

\begin{abstract}
Inspired by Stewart Priddy's cellular model for the $p$-local
Brown-Peterson spectrum $BP$, we give a construction of
a $p$-local $E_\infty$ ring spectrum $R$ which is a close
approximation to $BP$. Indeed we can show that if $BP$
admits an $E_\infty$ structure then these are weakly
equivalent as $E_\infty$ ring spectra. Our inductive
cellular construction makes use of power operations on
homotopy groups to  define homotopy classes which are
then killed by attaching $E_\infty$ cells.
\end{abstract}

\date{\version}

\maketitle

\section*{Introduction}

The notion of an \emph{$E_\infty$ ring spectrum} arose
in the 1970s, and was studied in depth by Peter May
\emph{et al} in \cite{LNM533}, then later reinterpreted
in the framework of~\cite{EKMM} as equivalent to that
of a \emph{commutative $S$-algebra}. A great deal of
work on the existence of $E_\infty$ structures using
various obstruction theories has led to a considerable
enlargement of our range of known examples. A useful
recent discussion of relationships between various
aspects of these topics can be found in~\cite{JPM:Einfty?}.

However, despite this, there are some gaps in our
knowledge. The question that is a major motivation
of this paper is
\begin{itemize}
\item
\emph{Does the $p$-local Brown-Peterson spectrum~$BP$
for a prime~$p$ admit an $E_\infty$ ring structure?}
\end{itemize}
This has been flagged up as an outstanding problem for
almost four decades, despite various attempts to answer
it.

Around 1980, Stewart Priddy~\cite{SP:BP} showed how to
build an efficient cellular model for the spectrum $BP$.
This stimulated the later work of~\cite{HKM} (where the
basic method was analysed and extended to $E_\infty$ ring
spectra), then~\cite{AJB&JPM} (where outstanding issues
about the spectrum case were addressed) and~\cite{TAQ}
(where the analogous multiplicative theory was described
using topological Andr\'e-Quillen homology in place of
ordinary homology). However, none of this answers the
above question!

Some other recent results also add to the uncertainty.
Niles Johnson and Justin Noel~\cite{NJ&JN} have shown
that for some small primes at least, the natural
orientation map of ring spectra $MU\lra BP$ cannot be
$E_\infty$ (or even $H_\infty$). On the other hand,
Mike Hill, Tyler Lawson and Niko Naumann~\cites{MH&TL,TL&NN}
have shown that for the primes~$2$ and~$3$, $BP\<2\>$
admits an $E_\infty$ ring structure. Finally, partial
results on higher coherence of the multiplication on~$BP$
have been proved by Birgit Richter~\cite{BR:BP-En}, and
Maria Basterra and Mike Mandell~\cite{MB&MM:BP} (the
latter uses ideas pioneered in an influential but
unpublished preprint of Igor \Kriz~\cite{IK:BP}).

Our main purpose in this paper is to give a prescription
for constructing a close approach to $BP$ at a prime~$p$.
We will show that there is a connective finite type
$p$-local $E_\infty$ ring spectrum $R$ such that the
following hold.
\begin{itemize}
\item
The homotopy $\pi_*R$ is torsion-free.
\item
There is a morphism of ring spectra $BP\lra R$ which
is a rational weak equivalence.
\item
If $BP$ admits an $E_\infty$ ring structure then there
is a weak equivalence of $E_\infty$ ring spectra
$R\xrightarrow{\;\sim\;}BP$.
\end{itemize}

Our construction proceeds in two main stages, the first
of which yields a morphism of $p$-local $E_\infty$ ring
spectra $R_\infty\lra MU_{(p)}$ so that the composition
\[
R_\infty\xrightarrow{\ph{\;\epsilon\;}} MU_{(p)}
        \xrightarrow{\;\epsilon\;} BP
\]
with the Quillen projection $\epsilon$ is a morphism
of ring spectra which induces an epimorphism on
$\pi_*(-)$ and is a rational equivalence. The second
stage gives a morphism of $E_\infty$ ring spectra
$R_\infty\lra R$ which is a rational equivalence
and where $\pi_*(R)$ is torsion free. One source of
difficulty with our construction is that if $R$ is
an $E_\infty$ realisation of $BP$, then there can
be no $E_\infty$ morphism $R\lra MU_{(p)}$
by~\cite{HKM}*{theorem~2.11}. If we could produce
any map of spectra $R\lra BP$ which is an equivalence
on the bottom cell then the composition $BP\lra R\lra BP$
would be a weak equivalence and so would each of the
maps $BP\lra R$ and $R\lra BP$.

\section{Attaching $E_\infty$ cells to commutative
$S$-algebras}\label{sec:Cells}

We recall the idea of attaching $E_\infty$ cells to a
commutative $S$-algebra. Details can be found in~\cite{EKMM},
and it was exploited in~\cite{TAQ} to describe topological
Andr\'e-Quillen homology of CW commutative $S$-algebras.
We will make use of various obstructions involving free
commutative $S$-algebras. Recall from~\cite{EKMM} that if
$X$ is an $S$-module then the free commutative $S$-algebra
on $X$ is
\[
\mathbb{P}X = \mathbb{P}_SX = \bigvee_{r\geq0} X^{(r)}/\Sigma_r.
\]
When $X$ is cofibrant, for each $r\geq1$ the natural projection
provides a weak equivalence
\begin{equation}\label{eq:ExtPow-wkeqce}
E\Sigma_r\ltimes_{\Sigma_r}X^{(r)}
                    \xrightarrow{\;\sim\;} X^{(r)}/\Sigma_r.
\end{equation}

Let $E$ be a commutative $S$-algebra and let $f\:\bigvee_i S^n\lra E$
be a map from a finite wedge of $n$-spheres. Then there
is a unique extension of $f$ to a morphism of commutative
$S$-algebras $\tilde f\:\mathbb{P}(\bigvee_i S^n)\lra E$
from the free commutative $S$-algebra on $\bigvee_i S^n$.
Then the pushout diagram of commutative $S$-algebras
\[
\xymatrix{
\mathbb{P}(\bigvee_i S^n) \ar[rr]^{\ph{abc}\tilde f}
    \ar[dd]_{\mathbb{P}(\inc)}\ar@{}[ddrr]|{\PO}
        && E\ar[dd] \\
          &&& \\
\mathbb{P}(\bigvee_i D^{n+1}) \ar[rr] && E/\!/f
}
\]
defines $E/\!/f$ which we can regard as obtained from~$E$
by attaching $E_\infty$ cells. In fact, we can take
\[
E/\!/f =
\mathbb{P}(\bigvee_i D^{n+1})\wedge_{\mathbb{P}(\bigvee_i S^n)}E
\]
where $\mathbb{P}(\bigvee_i D^{n+1})$ and $E$ are
$\mathbb{P}(\bigvee_i S^n)$-algebras in the evident way.

The homology of extended powers has been well studied
and we can deduce the following.
\begin{prop}\label{prop:PSn-rational}
For $n\in\N$, we have
\[
H_*(\mathbb{P}S^{2n-1};\Q) = \Lambda_\Q(x_{2n-1}),
\quad
H_*(\mathbb{P}S^{2n};\Q) = \Q[x_{2n}],
\]
where $x_m\in H_m(\mathbb{P}S^m;\Q)$ is the image of the
homology generator of $H_m(S^m;\Q)$.
\end{prop}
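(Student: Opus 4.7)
The plan is to use the weak equivalence \eqref{eq:ExtPow-wkeqce} to rewrite
\[
H_*(\mathbb{P}S^m;\Q) \cong \bigoplus_{r\geq 0}
     H_*\!\bigl(E\Sigma_r\ltimes_{\Sigma_r}(S^m)^{(r)};\Q\bigr),
\]
and then compute each summand via the homotopy orbit spectral sequence. Since $\Sigma_r$ is finite and we are working with rational coefficients, the higher group homology of $\Sigma_r$ vanishes, so this spectral sequence collapses to
\[
H_*\!\bigl(E\Sigma_r\ltimes_{\Sigma_r}(S^m)^{(r)};\Q\bigr)
    \cong \tilde H_*\!\bigl((S^m)^{(r)};\Q\bigr)_{\Sigma_r},
\]
the $\Sigma_r$-coinvariants of a one-dimensional $\Q$-vector space concentrated in degree $mr$.

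The $\Sigma_r$-action on this vector space comes from the permutation action on $(S^m)^{(r)}$: a transposition swaps two $S^m$-factors and hence acts on $S^m\wedge S^m\simeq S^{2m}$ by $(-1)^m$, so $\Sigma_r$ acts via $\sign^m$. When $m=2n$ is even the action is trivial, so the coinvariants give one copy of $\Q$ in each degree $2nr$. When $m=2n-1$ is odd the action is the sign representation, and for $r\geq 2$ any transposition $\tau$ gives $\tau v - v = -2v$, so the coinvariants vanish over $\Q$; only the $r=0$ and $r=1$ summands survive, producing classes of degrees $0$ and $2n-1$.

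To upgrade these additive computations to algebra isomorphisms, recall that $\mathbb{P}S^m$ is a commutative $S$-algebra, so $H_*(\mathbb{P}S^m;\Q)$ is a graded-commutative $\Q$-algebra. In the odd case graded commutativity forces $x_{2n-1}^2=0$, and the additive dimensions then identify this algebra with $\Lambda_\Q(x_{2n-1})$ at once. In the even case, the key technical point is to verify that $x_{2n}^r$ really is a nonzero class in the $r$-th summand; this holds because the multiplication on $\mathbb{P}S^{2n}$ restricts to the natural map $(S^{2n})^{\wedge r}\to(S^{2n})^{(r)}/\Sigma_r$, which on $\Q$-homology is the isomorphism $\Q\to\Q_{\Sigma_r}\cong\Q$ (trivial action). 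Thus $x_{2n}^r$ generates the $r$-th summand for every $r$, and $H_*(\mathbb{P}S^{2n};\Q)\cong\Q[x_{2n}]$. The main obstacle throughout is precisely this last identification of the ring structure: the additive computation is elementary once the splitting and the $\sign^m$-action are in hand, but checking that products in $\mathbb{P}S^m$ populate successive extended-power summands with nonzero classes is what actually identifies the algebra.
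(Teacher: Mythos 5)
Your proof is correct and follows essentially the same route as the paper: decompose $\mathbb{P}S^m$ into extended powers via \eqref{eq:ExtPow-wkeqce}, reduce rationally to the $\Sigma_r$-coinvariants of $H_*((S^m)^{(r)};\Q)$, and observe that the permutation action is by $\sign^m$, killing the summands with $r\geq 2$ in the odd case and leaving one copy of $\Q$ in each degree $2nr$ in the even case. Your explicit check that $x_{2n}^r$ generates the $r$-th summand (via the projection $(S^{2n})^{\wedge r}\to (S^{2n})^{(r)}/\Sigma_r$ inducing the coinvariants isomorphism) supplies the ring-structure step that the paper dispatches with ``it follows easily.''
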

\begin{proof}
The weak equivalences of~\eqref{eq:ExtPow-wkeqce} combine
to give a weak equivalence
\[
\bigvee_{r\geq0} E\Sigma_r\ltimes_{\Sigma_r}(S^{2n-1})^{(r)}
\xrightarrow{\;\sim\;}
\bigvee_{r\geq0}(S^{2n-1})^{(r)}/\Sigma_r
                                       = \mathbb{P}S^{2n-1}.
\]
By~\cite{LNM1213}*{chapter~VIII}, for $r\geq2$ we have
\[
H_*(E\Sigma_r\ltimes_{\Sigma_r}(S^{2n-1})^{(r)};\Q)
            = H_*((S^{2n-1})^{(r)};\Q)_{\Sigma_r} = 0,
\]
since the permutation action of $\Sigma_r$ on the factors
is equivalent to the sign representation,
\[
H_*((S^{2n-1})^{(r)};\Q) \iso \Q^-,
\]
which is a summand of the regular representation $\Q[\Sigma_r]$,
hence it has trivial cohomology, and in particular trivial
coinvariants. Thus we have
\[
H_*(\mathbb{P}S^{2n-1};\Q) = \Lambda_\Q(x_{2n-1}),
\]
where
\[
x_{2n-1}\in H_{2n-1}(D_1S^{2n-1};\Q) \iso
 H_{2n-1}(E\Sigma_r\ltimes_{\Sigma_r}(S^{2n-1})^{(r)};\Q).
\]

Similarly,
\[
\mathbb{P}S^{2n} =\bigvee_{r\geq0}D_rS^{2n} \sim
\bigvee_{r\geq0} E\Sigma_r\ltimes_{\Sigma_r}(S^{2n})^{(r)},
\]
but this time the $\Sigma_r$ action on the factors is trivial
giving
\[
H_*((S^{2n-1})^{(r)};\Q) \iso \Q,
\]
hence
\[
H_*(E\Sigma_r\ltimes_{\Sigma_r}(S^{2n})^{(r)};\Q)
         = H_*((S^{2n})^{(r)};\Q)_{\Sigma_r} = \Q
\]
concentrated in degree $2nr$. It follows easily that
$H_*(\mathbb{P}S^{2n};\Q)$ is polynomial on the stated
generator.
\end{proof}

The next result is fundamental,
see~\cites{LNM533,JPM:HomOps,JPM:Hinfty,NJK:Transfer,NJK&JMcC:HF2InfLoopSpcs}.
Here we use the convention that the excess of the
empty exponent sequence is $\exc(\emptyset) = \infty$.
\begin{thm}\label{thm:H*PX}
If $X$ is connective then for a prime $p$, $H_*(\mathbb{P}X;\F_p)$
is the free commutative graded $\F_p$-algebra generated by elements
$Q^Ix_j$, where $x_j$ for $j\in J$ gives a basis for $H_*(X;\F_p)$,
and $I=(\epsilon_1,i_1,\epsilon_2,\ldots,\epsilon_\ell,i_\ell)$
is admissible with $\exc(I)+\epsilon_1>|x_j|$ when~$p$ is odd,
while $I=(i_1,\ldots,i_\ell)$ is admissible with $\exc(I)>|x_j|$
when~$p=2$.
\end{thm}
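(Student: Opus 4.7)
My plan is to derive the statement from the classical computation of the mod~$p$ homology of the extended powers $D_rX=E\Sigma_r\ltimes_{\Sigma_r}X^{(r)}$. First, using the weak equivalence \eqref{eq:ExtPow-wkeqce} we obtain
\[
\mathbb{P}X\xrightarrow{\;\sim\;}\bigvee_{r\geq0}D_rX,
\]
so as an $\F_p$-module $H_*(\mathbb{P}X;\F_p)=\bigoplus_{r\geq0}H_*(D_rX;\F_p)$. The multiplication on $\mathbb{P}X$ restricts to the canonical pairings $D_rX\wedge D_sX\to D_{r+s}X$, giving $H_*(\mathbb{P}X;\F_p)$ the structure of a graded commutative $\F_p$-algebra; and since $\mathbb{P}X$ is the free commutative $S$-algebra on $X$, the structure maps $D_r\mathbb{P}X\to\mathbb{P}X$ let one define and iterate Dyer-Lashof operations on $H_*(\mathbb{P}X;\F_p)$, producing for each $x_j\in H_*(X;\F_p)$ and each admissible~$I$ a class $Q^Ix_j$ lying in $H_*(D_{p^{\ell(I)}}X;\F_p)$.

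Next I would reduce to the case where $X$ is a wedge of spheres. Since $X$ is connective we may model it by a CW $S$-module, and a standard induction over the skeletal filtration, using that $D_r(-)$ preserves cofibre sequences and that the associated spectral sequence collapses once the Dyer-Lashof structure is accounted for, bootstraps from the sphere case. For a wedge the canonical equivalence $\mathbb{P}(Y\vee Z)\simeq\mathbb{P}Y\wedge\mathbb{P}Z$ of commutative $S$-algebras turns the calculation into a tensor product of sphere computations, so it suffices to understand $H_*(\mathbb{P}S^n;\F_p)$. For $X=S^n$, the argument is the classical one of Dyer-Lashof, Kudo-Araki, May and Cohen (see \cite{LNM533} and \cite{LNM1213}*{chapters VIII, IX}): one identifies $H_*(D_{p^k}S^n;\F_p)$ additively via the fibration $(S^n)^{(p^k)}\to D_{p^k}S^n\to B\Sigma_{p^k}$ using the known mod~$p$ cohomology of the symmetric groups, and the Nishida and Adem relations then cut iterated operations down to admissible monomials of the stated excess.

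The main obstacle will be verifying the precise excess condition: the bound $\exc(I)+\epsilon_1>|x_j|$ (respectively $\exc(I)>|x_j|$ at $p=2$) encodes the instability $Q^ix=x^p$ when $2i=|x|$ (and its Bockstein cousin at odd primes), and this is what distinguishes genuinely new algebra generators from $p$-th powers of existing ones. This is the delicate heart of the calculation and requires the full strength of the Nishida relations together with the analysis of the top operation on the fundamental class of $S^n$. Once that is in place, freeness reduces to a bookkeeping comparison of Poincar\'e series of the proposed free algebra against the direct sum $\bigoplus_{r\geq0}H_*(D_rX;\F_p)$, filtration piece by filtration piece.
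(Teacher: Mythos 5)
The paper does not actually prove this statement: it is quoted as a ``fundamental'' known result with references to \cite{LNM533}, May's papers on homology operations, Kuhn, and Kuhn--McCarty, so there is no in-text argument to compare yours against. Your sketch is a reasonable outline of the standard literature proof (splitting $\mathbb{P}X$ into extended powers $D_rX$, computing each summand, and then matching the answer against the free algebra on admissible $Q^Ix_j$ of the stated excess by a Poincar\'e series count), and the final paragraph correctly identifies where the real content lies.

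The one step I would push back on is the reduction to wedges of spheres. The claim that ``$D_r(-)$ preserves cofibre sequences'' is false as stated; what exists is a filtration of $D_r(X)$ associated to a cofibration $A\to X$ with subquotients $D_iA\wedge D_{r-i}(X/A)$, and the resulting spectral sequence does not obviously collapse without further input. More seriously, for a general connective $X$ one cannot replace $X$ by a wedge of spheres with the same mod~$p$ homology (a mod~$p$ Moore spectrum already receives no such map), so a naive CW bootstrap does not reduce the general case to the sphere case. The standard way around this, and the route taken in \cite{LNM1213}*{chapter~VIII} and \cite{LNM1176}, is to prove directly the natural isomorphism
\[
H_*(D_rX;\F_p)\iso H_*\bigl(\Sigma_r;H_*(X;\F_p)^{\otimes r}\bigr),
\]
which exhibits $H_*(D_rX;\F_p)$ as a functor of the graded $\F_p$-module $H_*(X;\F_p)$ alone; only then does the sphere computation (group homology of $\Sigma_{p^k}$ with trivial or sign coefficients, plus the Adem/Nishida bookkeeping and the identification $Q^ix=x^p$ at the bottom excess) determine the general answer. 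If you replace your skeletal-induction step with that functoriality statement, the rest of your outline matches the classical argument the paper is citing.
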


Using the notation $R\<G\>$ for the free commutative graded
algebra over $R$ on a collection of homogeneous generators~$G$,
this gives the following formulae. Thus for $p$ odd,
\[
H_*(\mathbb{P}X;\F_p) =
 \F_p\<Q^Ix_j:\text{$j\in J$, $\exc(I)+\epsilon_1>|x_j|$}\;\>
\]
is polynomial on the stated generators with $|Q^Ix_j|$ even
and exterior on those generators with $|Q^Ix_j|$ odd, while
for $p=2$,
\begin{align*}
H_*(\mathbb{P}X;\F_2)
  &= \F_2\<Q^Ix_j:\text{$j\in J$, $\exc(I)>|x_j|$}\;\> \\
  &= \F_2[Q^Ix_j:\text{$j\in J$, $\exc(I)>|x_j|$}\;].
\end{align*}
\begin{rem}\label{rem:triviality}
If $E$ is a commutative $S$-algebra and that $f\:X\lra E$
is a map of spectra for which the induced homomorphism
\[
f_*\:H_*(X;\F_p)\lra H_*(E;\F_p)
\]
is trivial. Then by Theorem~\ref{thm:H*PX}, the induced
ring homomorphism
\[
\tilde{f}_*\:H_*(\mathbb{P}X;\F_p)\lra H_*(E;\F_p)
\]
is also trivial since it is a homomorphism of algebras over
the  Dyer-Lashof algebra.
\end{rem}

We record some results on the attaching of $E_\infty$ cones
to commutative $S$-algebras and its effect on ordinary
homology. We will make repeated use of the K\"unneth spectral
sequence of~\cite{EKMM}. By~\cites{AB&AL:ASS} this is
multiplicative, and for a prime~$p$ an extension of the work
of~\cites{TH:THH,MB&MM:BP} shows that it has Dyer-Lashof
operations.

First we give some easy observations on rational homology.
\begin{prop}\label{prop:rational-odd}
Suppose that $E$ is a connective commutative $S$-algebra
and let $n\in\N$. \\
\emph{(a)} If $f\:S^{2n-1}\lra E$ is a map for which the
induced homomorphism $f_*\:H_*(S^{2n-1};\Q)\lra H_*(E;\Q)$
is trivial, then
\[
H_*(E/\!/f;\Q) = H_*(E;\Q)[w],
\]
where $w\in H_{2n}(E/\!/f;\Q)$. \\
\emph{(b)} If $f\:S^{2n}\lra E$ is a map for which the
induced homomorphism $f_*\:H_*(S^{2n};\Q)\lra H_*(E;\Q)$
is trivial, then
\[
H_*(E/\!/f;\Q) = \Lambda_{H_*(E;\Q)}[z],
\]
where $z\in H_{2n+1}(E/\!/f;\Q)$.
\end{prop}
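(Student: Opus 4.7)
The plan is to identify $E/\!/f$ with the smash product $\mathbb{P}D^{m+1}\wedge_{\mathbb{P}S^{m}}E$ (where $m=2n-1$ in part~(a) and $m=2n$ in part~(b)) and feed this into the rationalised K\"unneth spectral sequence of~\cite{EKMM}
\[
E^2_{*,*}=\Tor_{*,*}^{H_*(\mathbb{P}S^m;\Q)}\bigl(H_*(\mathbb{P}D^{m+1};\Q),\,H_*(E;\Q)\bigr)\Lra H_*(E/\!/f;\Q),
\]
which is multiplicative by~\cite{AB&AL:ASS}. Since $D^{m+1}$ is contractible, so is each smash power $(D^{m+1})^{(r)}$ for $r\geq 1$, whence $\mathbb{P}D^{m+1}\simeq S$ and $H_*(\mathbb{P}D^{m+1};\Q)=\Q$ is concentrated in degree~$0$. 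Proposition~\ref{prop:PSn-rational} identifies the coefficient ring $H_*(\mathbb{P}S^m;\Q)$ with $\Lambda_\Q(x_{2n-1})$ in case~(a) and $\Q[x_{2n}]$ in case~(b).

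The vanishing hypothesis on $f_*$ means that the induced algebra map $\tilde{f}_*$ kills $x_m$; because $x_m$ generates $H_*(\mathbb{P}S^m;\Q)$ as a $\Q$-algebra in both cases, $\tilde{f}_*$ factors through the augmentation. So $H_*(E;\Q)$ becomes an $H_*(\mathbb{P}S^m;\Q)$-module via the augmentation (as is $\Q$ trivially), and splitness of the augmentation gives
\[
E^2_{*,*}\iso H_*(E;\Q)\otimes_\Q\Tor^{H_*(\mathbb{P}S^m;\Q)}(\Q,\Q).
\]
Standard Koszul-resolution computations yield $\Tor^{\Lambda_\Q(x_{2n-1})}(\Q,\Q)=\Q[\bar{x}]$ with $|\bar{x}|=2n$ in case~(a), and $\Tor^{\Q[x_{2n}]}(\Q,\Q)=\Lambda_\Q(y)$ with $|y|=2n+1$ in case~(b); in both cases the new class sits in homological filtration~$1$, while $H_*(E;\Q)$ occupies filtration~$0$.

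The spectral sequence then collapses at $E^2$ on filtration grounds. Any differential $d_r$ with $r\geq 2$ applied to $\bar{x}$ or $y$ lands in negative filtration and hence vanishes; the Leibniz rule propagates this to all products, and classes in filtration~$0$ can only map to negative filtration. Lifting $\bar{x}$ to $w\in H_{2n}(E/\!/f;\Q)$ and $y$ to $z\in H_{2n+1}(E/\!/f;\Q)$, filtration considerations force every power $w^k$ to remain nonzero and $H_*(E;\Q)$-linearly independent (otherwise $\bar{x}^k$ would vanish in the associated graded), producing the polynomial extension $H_*(E;\Q)[w]$ in~(a); in case~(b), graded commutativity forces $z^2=0$ since $|z|$ is odd, yielding $\Lambda_{H_*(E;\Q)}(z)$.

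The only genuinely delicate point is bookkeeping: verifying that rationally the filtration really does determine the claimed multiplicative extensions, and that the multiplicative K\"unneth spectral sequence does behave as expected when one of the input ring spectra is a free commutative $S$-algebra. Both points are handled by the multiplicative machinery of~\cite{AB&AL:ASS}, so the core of the argument reduces to the homological-algebra calculation of $\Tor$ and the degree-based collapse of the spectral sequence.
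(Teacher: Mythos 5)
Your proof is correct and follows essentially the same route as the paper: the multiplicative K\"unneth spectral sequence with $\mathrm{E}^2=\Tor^{H_*(\mathbb{P}S^m;\Q)}(\Q,H_*(E;\Q))$, the Koszul computation giving $\Gamma_{H_*(E;\Q)}(w)=H_*(E;\Q)[w]$ (rationally polynomial) in the odd case and $\Lambda_{H_*(E;\Q)}(z)$ in the even case, and collapse because the filtration-one generator supports no differentials. Your extra remarks on the augmentation module structure and the multiplicative extensions are correct elaborations of points the paper leaves implicit.
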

\begin{proof}
Recall Proposition~\ref{prop:PSn-rational}.

\noindent
(a) There is a multiplicative K\"unneth spectral
sequence~\cites{EKMM,AB&AL:ASS} of form
\[
\mathrm{E}^2_{s,t} =
  \Tor^{H_*(\mathbb{P}S^{2n-1};\Q)}_{s,t}(\Q,H_*(E;\Q))
                    \LRA H_{s+t}(E/\!/f;\Q),
\]
where we have
\[
\mathrm{E}^2_{*,*} = \Gamma_{H_*(E;\Q)}(w) = H_*(E;\Q)[w],
\]
with $w\in\mathrm{E}^2_{1,2n-1}$. As $w$ is an infinite
cycle for degree reasons, the result follows.

\noindent
(b) Here the relevant K\"unneth spectral sequence
\[
\mathrm{E}^2_{s,t} =
  \Tor^{H_*(\mathbb{P}S^{2n};\Q)}_{s,t}(\Q,H_*(E;\Q))
                    \LRA H_{s+t}(E/\!/f;\Q),
\]
has
\[
\mathrm{E}^2_{*,*} = \Lambda_{H_*(E;\Q)}(z)
\]
with $z\in\mathrm{E}^2_{1,2n}$ which is an infinite cycle for
degree reasons.
\end{proof}

Of course we can replace a single sphere by a wedge of spheres
in this result.

Now we will describe analogous results in positive characteristic.
When the context makes this unambiguous, we will often write
$H_*(-)$ for $H_*(-;\F_p)$ and $\otimes$ for $\otimes_{\F_p}$.
In the following, a \emph{$p$-truncated algebra} will mean
a quotient $\F_p$-algebra of the form
\[
\TP_p(x) = \F_p[x]/(x^p).
\]
and we will denote this by $\TP(x)$ when the prime~$p$
is clear. It is standard that a divided power algebra
on an element~$x$,
\[
\Gamma_{\F_p}(x) = \F_p\{1,\gamma_1(x),\gamma_2(x),\ldots\},
\]
is a tensor product of $p$-truncated algebras:
\[
\Gamma_{\F_p}(x) = \bigotimes_{r\geq0}\TP(\gamma_{p^r}(x)).
\]
Here the product is given by
\[
\gamma_r(x)\gamma_s(x) = \binom{r+s}{r}\gamma_{r+s}(x)
\]
and so for every $r$,
\[
\gamma_r(x)^p = 0.
\]
Furthermore, if $r$ has $p$-adic expension
\[
r = r_0 + r_1 p + \cdots + r_\ell p^\ell,
\]
where $r_i=0,1,\ldots,p-1$, then there is a non-zero
element $c_r\in\F_p$ for which
\[
\gamma_r(x) =
c_r\gamma_{1}(x)^{r_0}\gamma_{p}(x)^{r_1}
             \cdots\gamma_{p^\ell}(x)^{r_\ell}.
\]

To prove the odd primary case in our next result,
we make use of work of Hunter~\cite{TH:THH}.
\begin{prop}\label{prop:Hunter}
Let $p$ be an odd prime and let $E$ be a connective
commutative $S$-algebra. Suppose that $n\in\N$ and
$f\:S^{2n-1}\lra E$ is a map for which the induced
homomorphism $f_*\:H_*(S^{2n-1};\F_p)\lra H_*(E;\F_p)$
is trivial. Then the K\"unneth spectral sequence
\[
\mathrm{E}^2_{s,t} =
\Tor^{H_*(\mathbb{P}S^{2n-1};\F_p)}_{s,t}(\F_p,H_*(E;\F_p))
   \Lra H_{s+t}(E/\!/f;\F_p)
\]
has the following properties. \\
\emph{(a)}
The homology of\/ $\mathbb{P}S^{2n-1}$ is the free commutative
graded algebra
\begin{equation}\label{eq:H*PS(odd)}
H_*(\mathbb{P}S^{2n-1};\F_p) =
   \F_p\<Q^Ix_{2n-1}:\exc(I)+\epsilon_1>2n-1\;\>.
\end{equation}
\emph{(b)}
The\/ $\mathrm{E}^2$-term is a tensor product
\[
\mathrm{E}^2_{*,*} =
H_*(E;\F_p)\otimes \mathcal{D}\otimes \mathcal{E},
\]
of subalgebras, where $\mathcal{D}$ and $\mathcal{E}$ have the
following descriptions:
\begin{itemize}
\item
$\mathcal{D}$ is a tensor product of infinitely many divided power
algebras each having the form $\Gamma_{\F_p}([Q^Ix_{2n-1}])$
with a generator $[Q^Ix_{2n-1}]\in\mathrm{E}^2_{1,|Q^Ix_{2n-1}|}$
for each odd degree exterior generator occurring
 in~\eqref{eq:H*PS(odd)};
\item
$\mathcal{E}$ is an exterior algebra with a generator
$[Q^Ix_{2n-1}]\in\mathrm{E}^2_{1,|Q^Ix_{2n-1}|}$ for
each even degree polynomial generator $Q^Ix_{2n-1}$
occurring  in~\eqref{eq:H*PS(odd)}.
\end{itemize}
\emph{(c)}
In the above spectral sequence,
\[
\mathrm{E}^2_{*,*} =\cdots=\mathrm{E}^{p-1}_{*,*},
\quad
\mathrm{E}^{p}_{*,*}=\mathrm{E}^\infty_{*,*},
\]
where the differential $d^{p-1}$ acts on the divided
power generators of $\mathcal{D}$ by
\[
d^{p-1}\gamma_{p^r}([Q^Ix_{2n-1}]) \doteq
\begin{cases}
[\beta Q^{(|Q^Ix_{2n-1}|+1)/2}Q^Ix_{2n-1}]\gamma_{p^r-p}([Q^Ix_{2n-1}])
                   & \text{\rm if $r\geq1$}, \\
\qquad 0           & \text{\rm if $r=0$},
\end{cases}
\]
where $\doteq$ means `equal up to multiplication by a unit
in $\F_p$'.
\end{prop}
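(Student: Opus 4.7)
Part (a) is an immediate application of Theorem~\ref{thm:H*PX} to $X = S^{2n-1}$, whose $\F_p$-homology is generated by the single class $x_{2n-1}$. It will be convenient to split this algebra as $H_*(\mathbb{P}S^{2n-1};\F_p) = P\otimes\Lambda$, where $P$ is polynomial on the even-degree admissible monomials $Q^I x_{2n-1}$ and $\Lambda$ is exterior on the odd-degree ones.

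For (b), the triviality of $f_*$ together with Remark~\ref{rem:triviality} shows that the $H_*(\mathbb{P}S^{2n-1};\F_p)$-module structure on $H_*(E;\F_p)$ factors through the augmentation. Since $H_*(E;\F_p)$ is $\F_p$-flat, this gives
\[
\Tor^{H_*(\mathbb{P}S^{2n-1};\F_p)}_{*,*}(\F_p, H_*(E;\F_p)) \iso \Tor^{P\otimes\Lambda}_{*,*}(\F_p,\F_p)\otimes H_*(E;\F_p),
\]
reducing (b) to a standard Koszul computation: each polynomial generator $y$ of $P$ contributes an exterior generator $[y]\in\mathrm{E}^2_{1,|y|}$, while each exterior generator $y$ of $\Lambda$ contributes a divided power algebra on $[y]\in\mathrm{E}^2_{1,|y|}$. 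Assembling over all admissible $Q^I x_{2n-1}$ yields the asserted decomposition $H_*(E;\F_p)\otimes\mathcal{D}\otimes\mathcal{E}$.

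Part (c) is the main obstacle and is where we invoke Hunter's work~\cite{TH:THH}. The K\"unneth spectral sequence is multiplicative~\cite{AB&AL:ASS} and, extending the observations of~\cite{MB&MM:BP,TH:THH}, is compatible with Dyer-Lashof operations; this rigidly constrains the possible differentials. The exterior generators of $\mathcal{E}$ and the filtration-one classes $[y]=\gamma_1([y])\in\mathcal{D}$ must be permanent cycles for internal degree reasons, and on the higher divided power generators $\gamma_{p^r}([y])$ with $r\geq1$ an inspection of the Dyer-Lashof action, exactly as in Hunter's treatment of the B\"okstedt spectral sequence, rules out all differentials $d^s$ for $2\leq s\leq p-2$. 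The first potentially nonzero differential is $d^{p-1}$, for which Hunter's transpotence formula transports directly to the K\"unneth setting to give
\[
d^{p-1}\gamma_{p^r}([y]) \doteq [\beta Q^{(|y|+1)/2}y]\cdot\gamma_{p^r-p}([y])
\]
for $r\geq1$, with $d^{p-1}\gamma_1([y]) = 0$. Multiplicativity together with the Leibniz rule then propagates this to all classes in $\mathcal{D}$, and since the remaining algebra generators are permanent cycles one concludes $\mathrm{E}^p = \mathrm{E}^\infty$.
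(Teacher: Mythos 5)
Your argument is correct and follows essentially the same route as the paper's own proof: a standard Koszul resolution over the free algebra $H_*(\mathbb{P}S^{2n-1};\F_p)$ (with the module structure on $H_*(E;\F_p)$ factoring through the augmentation by Remark~\ref{rem:triviality}) gives the $\mathrm{E}^2$-term, and the differentials are imported from Hunter's proposition~11 via the multiplicative structure. Your write-up is somewhat more explicit about the reduction to $\Tor^{P\otimes\Lambda}(\F_p,\F_p)\otimes H_*(E;\F_p)$, but there is no substantive difference in method.
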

\begin{proof}
Using a standard Koszul resolution over the free algebra
$H_*(\mathbb{P}S^{2n-1})$ we obtain the stated form for
the $\mathrm{E}^2$-term. The statement about the
differentials involves a suitable reinterpretation
of~\cite{TH:THH}*{proposition~11} together with the
multiplicative structure of the spectral sequence.
\end{proof}


The situation for $p=2$ is simpler to describe and we state
it in greater generality than we actually need for the present
work.
\begin{prop}\label{prop:Hunter(2)}
Let $p=2$ and let $E$ be a connective commutative $S$-algebra.
Suppose that $0\leq n\in\Z$ and $f\:S^n\lra E$ is a map whose
induced homomorphism $f_*\:H_*(S^n;\F_2)\lra H_*(E;\F_2)$ is
trivial. Then the K\"unneth spectral sequence
\[
\mathrm{E}^2_{s,t} =
\Tor^{H_*(\mathbb{P}S^n;\F_2)}_{s,t}(\F_2,H_*(E;\F_2))
               \Lra H_{s+t}(E/\!/f;\F_2)
\]
has the following properties. \\
\emph{(a)}
The homology of\/ $\mathbb{P}S^n$ is the graded polynomial
algebra
\[
H_*(\mathbb{P}S^n;\F_2) = \F_2[Q^Ix_n:\exc(I)>n\;].
\]
\emph{(b)}
The\/ $\mathrm{E}^2$-term is the exterior algebra
\[
\mathrm{E}^2_{*,*} = \Lambda_{H_*(E;\F_2)}([Q^Ix_n]:\exc(I)>n),
\]
with a generator $[Q^Ix_n]\in\mathrm{E}^2_{1,|Q^Ix_n|}$
for each polynomial generator $Q^Ix_n$ in \emph{(a)}. \\
\emph{(c)}
This spectral sequence has trivial differentials from
$\mathrm{E}^2$ onwards.
\end{prop}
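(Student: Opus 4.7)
The three parts build on one another, so I would handle them in order. Part~(a) is an immediate application of Theorem~\ref{thm:H*PX}: with $X = S^n$ we have $H_*(S^n;\F_2) = \F_2\<x_n\>$, and at $p=2$ the free commutative graded $\F_2$-algebra is a pure polynomial algebra, which gives exactly the stated description of $H_*(\mathbb{P}S^n;\F_2)$.

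For part~(b), the plan is to run a Koszul computation and use the triviality hypothesis to kill the module action. By Remark~\ref{rem:triviality} the extended map $\tilde f_*\:H_*(\mathbb{P}S^n;\F_2) \lra H_*(E;\F_2)$ is trivial in positive degrees, so $H_*(E;\F_2)$ becomes a module over $H_*(\mathbb{P}S^n;\F_2)$ via the augmentation. Since $H_*(\mathbb{P}S^n;\F_2)$ is polynomial on the generators $Q^Ix_n$ with $\exc(I)>n$, the standard Koszul resolution
\[
K_* = H_*(\mathbb{P}S^n;\F_2)\otimes \Lambda_{\F_2}([Q^Ix_n]:\exc(I)>n), \quad d[Q^Ix_n]=Q^Ix_n,
\]
is a free resolution of $\F_2$. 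Tensoring over $H_*(\mathbb{P}S^n;\F_2)$ with $H_*(E;\F_2)$ annihilates every differential because the module structure factors through the augmentation, leaving precisely the claimed exterior algebra with coefficients, and with $[Q^Ix_n]$ in bidegree $(1,|Q^Ix_n|)$.

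Part~(c) then follows by a degree argument, combined with multiplicativity of the K\"unneth spectral sequence (as invoked in the paragraph before Proposition~\ref{prop:rational-odd}). Each generator $[Q^Ix_n]$ lies in $\mathrm{E}^2_{1,*}$, and a differential $d^r\:\mathrm{E}^r_{s,t}\lra \mathrm{E}^r_{s-r,t+r-1}$ with $r\geq2$ lands in $\mathrm{E}^r_{1-r,*}=0$, so every such generator is a permanent cycle; the same applies trivially to classes in $H_*(E;\F_2)\subset \mathrm{E}^2_{0,*}$. By the Leibniz rule the whole algebra $\mathrm{E}^2_{*,*}$ consists of permanent cycles, so $\mathrm{E}^2_{*,*}=\mathrm{E}^\infty_{*,*}$.

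The single delicate point is the passage from triviality of $f_*$ to triviality of $\tilde f_*$ in positive degrees; this is exactly what Remark~\ref{rem:triviality} supplies, using that $\tilde f_*$ respects the Dyer--Lashof operations that generate $H_*(\mathbb{P}S^n;\F_2)$ from $x_n$. Once that observation is in hand, the proposition reduces to the Koszul computation plus a clean degree collapse, with none of the odd-primary complications (divided powers, $d^{p-1}$ differentials) intruding because $H_*(\mathbb{P}S^n;\F_2)$ has no exterior factors.
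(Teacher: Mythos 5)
Your proposal is correct and follows essentially the same route as the paper: a standard Koszul resolution over the polynomial algebra $H_*(\mathbb{P}S^n;\F_2)$ to identify the $\mathrm{E}^2$-term as the stated exterior algebra, followed by the observation that the generators lie in filtration~$1$ (hence are permanent cycles) and multiplicativity to kill all differentials. The one point you spell out that the paper leaves implicit --- that Remark~\ref{rem:triviality} is what makes the module structure on $H_*(E;\F_2)$ factor through the augmentation, so the Koszul differentials die after tensoring --- is a correct and worthwhile clarification rather than a deviation.
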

\begin{proof}
Using a standard Koszul resolution over the free algebra
$H_*(\mathbb{P}S^n)$ we obtain the stated form for the
$\mathrm{E}^2$-term. The exterior generators all lie
in $\mathrm{E}^2_{1,*}$ and so must be infinite cycles.
The multiplicative structure of the spectral sequence
shows that all differentials are trivial.
\end{proof}

\begin{thm}\label{thm:Cone-homology}
Let $p$ be a prime. Suppose that $E$ is a connective commutative
$S$-algebra and that $f\:S^{2n-1}\lra E$ is a map for which the
induced homomorphism $f_*\:H_*(S^{2n-1};\F_p)\lra H_*(E;\F_p)$
is trivial. Then there is an element $u\in H_{2n}(E/\!/f;\F_p)$
such that if $p$ is odd,
\[
H_*(E/\!/f;\F_p) =
H_*(E;\F_p)\<Q^Iu:\exc(I)+\epsilon_1>2n,
  \;\text{\rm with $i_1=0$ if $\epsilon_1=1$ and $|Q^Iu|$ odd}\>,
\]
while if $p=2$,
\[
H_*(E/\!/f;\F_2) = H_*(E;\F_2)[Q^Iu:\exc(I)>2n].
\]
\end{thm}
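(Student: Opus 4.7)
The plan is to apply the multiplicative Künneth spectral sequence supplied by Propositions~\ref{prop:Hunter} and~\ref{prop:Hunter(2)}, compute $\mathrm{E}^\infty$, and then upgrade the resulting associated graded to the stated algebra structure by solving multiplicative extensions using the Dyer-Lashof operations that $H_*(E/\!/f;\F_p)$ inherits from $E/\!/f$. I define $u\in H_{2n}(E/\!/f;\F_p)$ to be a lift of the permanent cycle $[x_{2n-1}]\in \mathrm{E}^\infty_{1,2n-1}$; geometrically $u$ is the image of the fundamental class of the attached $E_\infty$ cone. Compatibility of the Dyer-Lashof action with the spectral sequence (the extension of~\cite{TH:THH,MB&MM:BP} noted before Proposition~\ref{prop:rational-odd}) then identifies the permanent cycle represented by $[Q^Ix_{2n-1}]$ with $Q^I u$, now interpreted with $|u|=2n$.

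For $p=2$, Proposition~\ref{prop:Hunter(2)} gives $\mathrm{E}^\infty = \mathrm{E}^2 = \Lambda_{H_*(E;\F_2)}([Q^Ix_{2n-1}]:\exc(I)>2n-1)$, an exterior algebra on the $1$-line. The extension is solved by the relation $y^2 = Q^{|y|}y$ in the Dyer-Lashof algebra: each lifted generator $Q^Iu$ squares to $Q^{|Q^Iu|}Q^Iu$, which by admissibility is again of the form $Q^Ju$ but with $\exc(J)=2n$, hence not among the listed generators. A routine dimension count against the exterior $\mathrm{E}^\infty$ then shows that the resulting graded ring is precisely the polynomial algebra $H_*(E;\F_2)[Q^Iu:\exc(I)>2n]$, the shift from $\exc(I)>2n-1$ to $\exc(I)>2n$ being exactly the change of reference degree from $|x_{2n-1}|$ to $|u|$.

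For $p$ odd, Proposition~\ref{prop:Hunter} exhibits $\mathrm{E}^2 = H_*(E;\F_p)\otimes\mathcal{D}\otimes\mathcal{E}$ with a single nontrivial differential $d^{p-1}$ that sends $\gamma_{p^r}([Q^Ix_{2n-1}])$ ($r\geq 1$, odd-degree generator) to a transpotence-type product involving $[\beta Q^{(|Q^Ix_{2n-1}|+1)/2}Q^Ix_{2n-1}]$. Applying this differential, together with multiplicativity, one checks that on $\mathrm{E}^\infty$ every divided power class $\gamma_{p^r}([Q^Ix_{2n-1}])$ with $r\geq 1$ collapses into, or is identified with, the $p^r$-th power of its generator $[Q^Ix_{2n-1}]$, while the classes of the form $\beta Q^{(|Q^Iu|+1)/2}Q^Iu$ (with $Q^Iu$ of even degree) are precisely the $d^{p-1}$-boundaries and hence do not survive. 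Solving the extension via $Q^{|y|/2}y=y^p$ on even-degree classes produces the free graded-commutative algebra on the stated set of $Q^Iu$'s: the side condition ``$i_1=0$ when $\epsilon_1=1$ and $|Q^Iu|$ is odd'' records exactly which leading $\beta Q^{i_1}$ sequences are eliminated by the transpotence differential.

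The main obstacle is the odd-primary bookkeeping: one must verify that the effect of $d^{p-1}$ on each divided power tower $\Gamma_{\F_p}([Q^Ix_{2n-1}])$ is, after reassembly, equivalent to imposing the $p$-th-power Dyer-Lashof relation on the lift $Q^Iu$, and must match the surviving classes bijectively with admissible sequences satisfying $\exc(I)+\epsilon_1>2n$ subject to the stated constraint. Keeping track of which odd-degree generators $\beta Q^{i_1}\cdots Q^{i_\ell}u$ are killed by the image of $d^{p-1}$ and which survive is the most delicate step; everything else reduces to standard manipulations of admissible monomials and Koszul resolutions.
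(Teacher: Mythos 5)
Your proposal is correct and follows essentially the same route as the paper: identify $\mathrm{E}^2$ and $\mathrm{E}^\infty$ of the K\"unneth spectral sequence via Propositions~\ref{prop:Hunter} and~\ref{prop:Hunter(2)}, take $u$ to be the class of the attached cone, and resolve the multiplicative extensions on the truncated/exterior associated graded by the top Dyer-Lashof operation $Q^k[Q^Ix_{2n-1}]=[Q^kQ^Ix_{2n-1}]\neq0$ (the Ligaard--Madsen argument the paper cites). If anything you are slightly more explicit than the paper about the re-indexing of excess from $|x_{2n-1}|$ to $|u|$ and about which odd-degree classes $\beta Q^{(|Q^Iu|+1)/2}Q^Iu$ are eliminated by the transpotence differential $d^{p-1}$, which is exactly what the side condition in the odd-primary statement records.
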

\begin{proof}
Taking into account the results of
Propositions~\ref{prop:Hunter} and~\ref{prop:Hunter(2)},
we find that the $\mathrm{E}^\infty$-term is a tensor
product of algebras
\[
\mathrm{E}^\infty_{*,*}
    = H_*(E)\otimes \mathcal{D}'\otimes\mathcal{E}',
\]
where
\begin{itemize}
\item
$\mathcal{D}'$ is a tensor product of infinitely many $p$-truncated
algebras each having  the form $\TP([Q^Ix_{2n-1}])$ with a generator
$[Q^Ix_{2n-1}]\in\mathrm{E}^\infty_{1,|Q^Ix_{2n-1}|}$ for each
exterior generator in~\eqref{eq:H*PS(odd)},
\item
$\mathcal{E}'$ is an exterior algebra with a generator
$[Q^Ix_{2n-1}]\in\mathrm{E}^\infty_{1,|Q^Ix_{2n-1}|}$
for each polynomial generator $Q^Ix_{2n-1}$ listed
in~\eqref{eq:H*PS(odd)}.
\end{itemize}

When $p$ is odd, each exterior generator $Q^Ix_{2n-1}$
is of odd degree so it gives rise to a $p$-truncated
algebra $\TP([Q^Ix_{2n-1}])$ concentrated in even degrees.
When $p=2$, $\mathcal{D}'$ is trivial and $\mathcal{E}'$
is generated by elements $[Q^Ix_{2n-1}]$ satisfying
$[Q^Ix_{2n-1}]^2=0$. In each case we need to show these
generators represent elements which are not nilpotent
in $H_*(E/\!/\gamma)$. We do this using Dyer-Lashof
operations, using a well known argument, see for
example~\cite{HL&IM}.

If $p$ is an odd prime, set
\[
k = \frac{|Q^Ix_{2n-1}|+1}{2},
\]
and if $p=2$, take
\[
k = |Q^Ix_{2n-1}|+1.
\]
Then in the $\mathrm{E}^2$-term we have
\[
Q^k[Q^Ix_{2n-1}] = [Q^kQ^Ix_{2n-1}] \neq 0
\]
since $Q^kQ^I$ is admissible. This shows that in $H_*(E/\!/\gamma)$
$[Q^Ix_{2n-1}]$ represents an element whose $p$-th power is represented
by $[Q^kQ^Ix_{2n-1}]$, thus resolving the multiplicative extensions
in the filtration.
\end{proof}

\begin{rem}\label{rem:Cone-homology-p=2}
When $p=2$ this results applies to all the cases of
Proposition~\ref{prop:Hunter(2)}. Thus for $f\:S^n\lra E$ with
$n\geq0$ we have
\[
H_*(E/\!/f;\F_2) = H_*(E;\F_2)[Q^Iu:\exc(I)>n+1].
\]
\end{rem}

\section{Power operations for $E_\infty$ ring spectra}\label{sec:PowOps}

We refer to \cite{LNM1176} for work on power operations, in particular
Bruner's chapters~IV and~V. Our main use of this is in connection with
applying `the first operation above the $p$-th power' $\beta\mathcal{P}^{k+1}$
to give a homotopy element of degree~$2k$. Here are the results we will
use.

At the prime $2$, we have
\begin{thm}\label{thm:PowOps2}
Suppose that $E$ is a connective $2$-local $E_\infty$ ring spectrum
for which $0=\eta1\in\pi_1E$. Then for\/ $r\geq1$, the operation
$\mathcal{P}^{2^{r+1}-1}$ is defined on $\pi_{2^{r+1}-2}E$, giving
a map
\[
\mathcal{P}^{2^{r+1}-1}\:\pi_{2^{r+1}-2}E \lra \pi_{2^{r+2}-3}E.
\]
Moreover, the indeterminacy is trivial and the operation
$2\mathcal{P}^{2^{r+1}-1}$ is trivial.
\end{thm}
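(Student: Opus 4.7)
The plan is to establish this as a specialisation to the degree $n=2^{r+1}-2$ of the theory of power operations on homotopy groups of $E_\infty$ ring spectra, as developed by Bruner in~\cite{LNM1176}*{chapters~IV, V}. The operation $\mathcal{P}^{2^{r+1}-1}(x)$ will be extracted from the extended power $D_2 S^n=(E\Sigma_2)_+\wedge_{\Sigma_2}(S^n)^{\wedge 2}$ and its filtration $F_0\subset F_1\subset\cdots$ by skeleta of $E\Sigma_2$, with the hypothesis $\eta\.1=0$ used to kill all $\eta$-contributions that arise.

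First I would represent $x\in\pi_nE$ by a map $x\:S^n\to E$ and use the $E_\infty$ structure to form the canonical extension $\xi_2(x)\:D_2 S^n\to E$ whose restriction to the bottom cell $F_0=S^{2n}$ realises $x^2$. Since $n=2^{r+1}-2$ is even, the swap action on $(S^n)^{\wedge 2}$ has degree $+1$ and the cellular attaching map $\alpha\:S^{2n}\to F_0$ of the $(2n+1)$-cell has degree $0$, hence is stably null. Thus $F_1\simeq S^{2n}\vee S^{2n+1}$, and I define $\mathcal{P}^{2^{r+1}-1}(x)\in\pi_{2n+1}(E)$ by composing $\xi_2(x)$ with the inclusion $S^{2n+1}\hookrightarrow F_1\hookrightarrow D_2 S^n$ of the $(2n+1)$-summand coming from a choice of splitting.

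Two such splittings differ by an automorphism of $F_1$ of the form ``add $\eta$'', so the corresponding classes in $\pi_{2n+1}(E)$ differ by $\eta\.x^2$. The hypothesis $\eta\.1=0\in\pi_1 E$ combined with the $\pi_*(S)$-module structure on $\pi_*E$ gives $\eta\.y=(\eta\.1)\.y=0$ for every $y\in\pi_*E$; in particular $\eta\.x^2=0$, so the indeterminacy is trivial. For the vanishing of $2\mathcal{P}^{2^{r+1}-1}(x)$, the attaching map of the $(2n+2)$-cell into $F_1\simeq S^{2n}\vee S^{2n+1}$ has components $(\epsilon\eta,2)$ onto the two summands, with $\epsilon\in\{0,1\}$ determined by the Steenrod structure of the underlying $\mathbb{RP}^\infty$-Thom spectrum. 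Since this attaching map becomes null inside $F_2\subset D_2 S^n$, composing with $\xi_2(x)\vert_{F_1}$ gives the relation
\[
\epsilon\,\eta\.x^2+2\mathcal{P}^{2^{r+1}-1}(x)=0
\]
in $\pi_{2n+1}(E)$, and the hypothesis again forces the first summand to vanish, so $2\mathcal{P}^{2^{r+1}-1}(x)=0$.

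The main technical obstacle I anticipate is the careful identification of the attaching maps of $D_2 S^n$ at this specific degree: verifying that the first attaching map is null as claimed, that the second has the stated $(\epsilon\eta,2)$-form, and that no higher attaching maps contribute further obstructions or relations in $\pi_{2n+1}(E)$. These are precisely the ingredients provided by Bruner's analysis in~\cite{LNM1176}, and what remains is to confirm that the single hypothesis $\eta\.1=0\in\pi_1 E$ suffices to eliminate every $\eta$-contribution that appears in the cellular analysis of $F_1,F_2\subset D_2 S^n$.
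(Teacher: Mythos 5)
Your argument is correct and is essentially the same as the paper's: the paper simply cites Bruner's Proposition V.1.5 (definedness, via $n\equiv -2\bmod 2$), Theorem V.1.8 (the relation $2\mathcal{P}^{2^{r+1}-1}w=\alpha w^2$ with $\alpha$ in the image of $\pi_1S\to\pi_1E$), and Table V.1.3 (indeterminacy for $n\equiv 2\bmod 4$), which encode exactly the cellular analysis of $F_0\subset F_1\subset F_2\subset D_2S^{n}$ that you carry out by hand. The only cosmetic difference is that you use the hypothesis $\eta\cdot 1=0$ to kill the indeterminacy $\eta\cdot x^2$, whereas the paper appeals to Bruner's table for that step; both yield the stated conclusion.
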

\begin{proof}
We will write $n=2^{r+1}-2$.

Applying~\cite{LNM1176}*{proposition~V.1.5} to the skeleton
$D_2^1S^{2n}$, we have
\[
i=1,\quad j=n+1,\quad \phi(i) = 1,
\]
and so
\[
n \equiv -2\mod{(2)},
\]
hence the operation $\mathcal{P}^{2^{r+1}-1}$ is defined
on $\pi_{2^{r+1}-2}E$. Also, by~\cite{LNM1176}*{theorem~V.1.8}
we have with $j=a=b=0$ and $w\in\pi_{2^{r+1}-2}E$,
\[
2\mathcal{P}^{2^{r+1}-1}w = 0
\]
since by assumption the natural map $\pi_1S\lra\pi_1E$
is trivial. Similarly, since $n\equiv 2\bmod{(4)}$, the
indeterminacy is trivial by~\cite{LNM1176}*{table~V.1.3}.
\end{proof}

For odd primes we have
\begin{thm}\label{thm:PowOps}
Let $p$ be an odd prime. Suppose that $E$ is a connective
$p$-local $E_\infty$ ring spectrum for which $0=\alpha_1 1\in\pi_{2p-3}E$.  \\
Then for\/ $r\geq1$, the operation $\beta\mathcal{P}^{p^r}$
is defined on~$\pi_{2(p^r-1)}E$ giving a map
\[
\beta\mathcal{P}^{p^r}\:\pi_{2(p^r-1)}E \lra \pi_{2(p^{r+1}-1)-1}E.
\]
Moreover the indeterminacy is trivial and the operation
$p\beta\mathcal{P}^{p^r}$ is trivial.
\end{thm}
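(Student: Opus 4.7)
The proof should proceed in close parallel with that of Theorem~\ref{thm:PowOps2}, simply replacing the prime $2$ input with its odd primary analogue in Bruner's machinery of \cite{LNM1176}*{chapter~V}. Set $n = 2(p^r-1)$, so $n \equiv 0 \pmod{2(p-1)}$, which is the basic congruence needed for a Steenrod-type operation of the form $\mathcal{P}^s$ (and hence $\beta\mathcal{P}^s$) to live on $\pi_n E$. The plan is to verify three numerical/obstruction conditions in turn: (i) $\beta\mathcal{P}^{p^r}$ is defined on $\pi_n E$; (ii) the indeterminacy is trivial; (iii) $p\beta\mathcal{P}^{p^r}$ is trivial.

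First I would apply the odd primary version of \cite{LNM1176}*{proposition~V.1.5} to the relevant skeleton of $D_p S^n$. With $n = 2(p^r-1)$ and $s = p^r$, the degree of $\beta\mathcal{P}^s$ is $2s(p-1)+1 = 2p^r(p-1)+1$, and the excess condition $2s - n \geq 1$ holds with room to spare, so the operation is defined on $\pi_n E$. The parameters $i$, $j$, $\phi(i)$ in Bruner's proposition are read off from the cellular filtration of $D_p S^n$ exactly as in the $p=2$ case, and the congruence $n \equiv -2 \pmod{2(p-1)}$ (which is equivalent to $n \equiv 0$ since $2(p-1) \mid 2p^r - 2p^{r-1}$, etc.) is satisfied.

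Next I would deal with the indeterminacy. Bruner's table \cite{LNM1176}*{table~V.1.3} tabulates the indeterminacy subgroup in terms of congruences on~$n$ modulo small powers of $p$ and $p-1$; for our specific choice $n = 2(p^r-1)$ with $r\geq 1$ these congruences force every candidate contribution to vanish, so the indeterminacy is trivial. This is the most bookkeeping-heavy step but is purely a matter of reading off the correct entry in the table.

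Finally, the triviality of $p\beta\mathcal{P}^{p^r}$ follows from the odd primary analogue of \cite{LNM1176}*{theorem~V.1.8}, which expresses $p\beta\mathcal{P}^{p^r}w$ as a linear combination of terms of the shape $\alpha_1 \cdot \lambda$ for various classes $\lambda$ derived from~$w$ (taking $j=a=b=0$ as in the mod~$2$ case). Because the hypothesis $\alpha_1 \cdot 1 = 0 \in \pi_{2p-3}E$ propagates through the ring structure of $\pi_* E$, every such term vanishes. I expect this last step to be the main obstacle: identifying which exact clause of Bruner's theorem~V.1.8 applies and checking that all correction terms really do sit in the ideal generated by $\alpha_1\cdot 1$, since at odd primes there are a priori more potential error terms than at the prime~$2$.
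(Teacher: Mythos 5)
Your plan follows the same route as the paper's proof: Bruner's proposition~V.1.5 for definedness, one of his tables for the indeterminacy, and theorem~V.1.8 for the $p$-divisibility statement, so in outline it is the intended argument. Two points of comparison are worth recording. First, the congruence you propose to verify in step (i) is garbled: $2(p^r-1)\equiv -2\pmod{2(p-1)}$ would require $(p-1)\mid p^r$, which fails for every odd $p$, and it is certainly not equivalent to $2(p^r-1)\equiv 0$. What the paper actually checks is that
$\psi\(2p^r(p-1)-1-2(p^r-1)(p-1)\)=\psi(2(p-1)-1)=\lfloor (2p-3)/(2(p-1))\rfloor=0$,
a small-excess condition rather than a congruence; since you explicitly defer to Bruner's proposition this is a bookkeeping slip rather than a structural gap. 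Second, and more substantively, your handling of $p\beta\mathcal{P}^{p^r}=0$ genuinely differs from the paper's: the paper reads theorem~V.1.8 as producing a single correction term $\alpha y^p$ with $\alpha\in\pi_{2(p-1)-2}S$, and kills it because that $p$-local stem of the sphere vanishes, so as written it makes no use of the hypothesis $\alpha_1 1=0$. You instead argue that the correction terms are $\alpha_1$-multiples annihilated by the hypothesis. A degree count on the formula $p\beta\mathcal{P}^{p^r}y=\alpha y^p$ places $\alpha$ in $\pi_{2p-3}S=\Z/p\{\alpha_1\}$, which supports your reading (and explains why the hypothesis appears in the statement at all); your version is the exact parallel of the $p=2$ case, where $\eta 1=0$ is genuinely used, so if anything your account of this step is the more careful one. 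Either way the conclusion stands.
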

\begin{proof}
We will assume that all spectra are localised at~$p$. Recall
that $\alpha_1\in\pi_{2p-3}S$ is a non-zero $p$-primary stable
homotopy element of lowest positive degree.

Using the results and notation of~\cite{LNM1176}*{proposition~V.1.5},
the fact that this operation is defined on $\pi_{2(p^r-1)}E$
this follows since
\[
\psi\biggl(2p^r(p-1)-1-2(p^r-1)(p-1)\biggr) = \psi(2(p-1)-1)
      = \left\lfloor\frac{2(p-1)-1}{2(p-1)}\right\rfloor = 0.
\]
For triviality of the indeterminacy, see~\cite{LNM1176}*{table~V.1.1}.

By~\cite{LNM1176}*{theorem~V.1.8}, for each $y\in\pi_{2(p^r-1)}E$
there is an element $\alpha\in\pi_{2(p-1)-2}S$ for which
\[
p(\beta\mathcal{P}^{p^r}y) = \alpha y^p.
\]
But $\alpha\in\pi_{2(p-2)}S=0$, hence $p(\beta\mathcal{P}^{p^r})$
is indeed trivial.
\end{proof}

The next result tells us how this works in the Adams spectral
sequence in good situations.
\begin{lem}\label{lem:PowOps2-1line}
Let $p$ be a prime \\
\emph{(i)} If $p=2$, then under the assumptions of\/
\emph{Theorem~\ref{thm:PowOps2}}, if\/ $w\in\pi_{2^{r+1}-2}E$
is detected in the $1$-line of the Adams spectral sequence
by $W\in\Ext_{\mathcal{A}(2)_*}^{1,2^{r+1}-1}(\F_2,H_*E)$,
then $\mathcal{P}^{2^{r+1}-1}w$ is detected in the
$1$-line by
\[
\mathcal{P}^{2^{r+1}-1}W
       \in\Ext^{1,2^{r+2}-2}_{\mathcal{A}(2)_*}(\F_2,H_*E),
\]
where $\mathcal{P}^{2^{r+1}-1}$ is the algebraic Steenrod
operation of\/~\cites{JPM:StOps,RJM:GpReps&ASS,LNM1176}.
This can be calculated by applying the Dyer-Lashof operation
$Q^{2^{r+1}-1}$ to the element of
$\mathcal{A}(2)_*\otimes H_*E = H_*(H\wedge E)$ representing~$W$. \\
\emph{(ii)} If $p$ is odd, then the assumptions of\/
\emph{Theorem~\ref{thm:PowOps}}, if\/ $w\in\pi_{2(p^r-1)}E$
is detected in the $1$-line of the Adams spectral sequence
by $W\in\Ext_{\mathcal{A}(p)_*}^{1,2p^r-1}(\F_p,H_*E)$, then
$\beta\mathcal{P}^{p^r}w$ is detected in the $1$-line by
\[
\beta\mathcal{P}^{p^r}W
       \in\Ext^{1,2(p^{r+2}-1)}_{\mathcal{A}(p)_*}(\F_p,H_*E),
\]
where $\beta\mathcal{P}^{p^r}$ is the algebraic Steenrod
operation of\/~\cites{JPM:StOps,RJM:GpReps&ASS,LNM1176}. This
can be calculated by applying the Dyer-Lashof operation
$\beta Q^{p^r}$ to the element of
$\mathcal{A}(p)_*\otimes H_*E = H_*(H\wedge E)$ representing~$W$.
\end{lem}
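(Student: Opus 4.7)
The plan is to invoke the general theory of algebraic Steenrod operations in the Adams spectral sequence, due to May and Milgram and developed in the $H_\infty$ setting by Bruner in~\cite{LNM1176}*{chapters IV, V}. Under the hypotheses of Theorems~\ref{thm:PowOps2} and~\ref{thm:PowOps}, the relevant geometric power operations on $w$ have trivial indeterminacy and a trivial $p$-multiple, which is precisely what is needed to match them cleanly with the algebraic Steenrod operations on $E_2$.

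First I would recall Bruner's detection theorem: if $E$ is a connective $E_\infty$ ring spectrum, then the mod $p$ Adams spectral sequence
\[
E_2^{s,t} = \Ext^{s,t}_{\mathcal{A}(p)_*}(\F_p,H_*E) \Lra \pi_{t-s}E
\]
is a spectral sequence of modules over the algebraic Steenrod algebra, and for a class $w\in\pi_*E$ detected by $W\in E_\infty^{s,t}$, the geometric operation $\mathcal{P}^k w$ (respectively $\beta\mathcal{P}^k w$) is detected by the algebraic Steenrod operation $\mathcal{P}^k W$ (respectively $\beta\mathcal{P}^k W$), modulo higher Adams filtration and modulo error terms coming from lower Dyer-Lashof operations acting on $W$ and lower-filtration classes. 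For our $w$ in filtration one, the error terms in Bruner's formulas are controlled by the assumptions $\eta 1=0$ (at $p=2$) and $\alpha_1 1=0$ (at odd $p$), already invoked in the proofs of Theorems~\ref{thm:PowOps2} and~\ref{thm:PowOps}, so the only non-trivial contribution is the leading algebraic operation. A further filtration shift would place the detecting element in filtration $\geq 2$, but then it would be hit by a differential originating in filtration $\leq 1$; since $\Ext^0_{\mathcal{A}(p)_*}(\F_p,H_*E)$ is $\F_p$ concentrated in degree zero and $W$ has positive stem, the leading term on the $1$-line is forced to survive.

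Next I would identify the algebraic Steenrod operation on the $1$-line with a Dyer-Lashof operation. Via the cobar resolution, $E_2^{1,t}$ is computed from $1$-cocycles in $\overline{\mathcal{A}(p)_*}\otimes H_*E \subseteq \mathcal{A}(p)_*\otimes H_*E = H_*(H\wedge E)$, modulo coboundaries. Both the algebraic Steenrod operation on $\Ext^1$ (via the May--Milgram construction on the cobar complex) and the Dyer-Lashof operation on $H_*(H\wedge E)$ (coming from the canonical $E_\infty$ structure on the smash product of $E_\infty$ ring spectra) arise from the same universal construction, namely applying $E\Sigma_p\ltimes_{\Sigma_p}(-)^{(p)}$ to a representing cycle. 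Restricted to the $1$-line this forces the correspondences $\mathcal{P}^{2^{r+1}-1}\leftrightarrow Q^{2^{r+1}-1}$ and $\beta\mathcal{P}^{p^r}\leftrightarrow \beta Q^{p^r}$.

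The main obstacle is the bookkeeping: matching Bruner's numerical parameters (the functions $\phi$ and $\psi$ of~\cite{LNM1176}*{V.1.5} and the parity tables of~\cite{LNM1176}*{V.1.1, V.1.3}) with the specific degrees $2^{r+1}-2$ and $2(p^r-1)$ of $w$, and verifying that the algebraic Steenrod index ($2^{r+1}-1$ or $p^r$) coincides with the Dyer-Lashof index under the identification of $\Ext^1$ with a subquotient of $H_*(H\wedge E)$. Once this indexing is confirmed, the statement is an immediate consequence of the compatibility between the geometric and algebraic constructions on the level of the universal $E_\infty$ operad acting on both sides.
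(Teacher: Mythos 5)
Your overall route is the same as the paper's: the entire published proof is a citation of the Milgram--Bruner theory of algebraic Steenrod operations in the Adams spectral sequence of an $H_\infty$ ring spectrum (together with the correction of a misprint in~\cite{LNM1176}), and your expanded sketch of that machinery --- including the identification of the algebraic operation on the $1$-line with the Dyer--Lashof operation on a cobar representative in $H_*(H\wedge E)$ --- is the intended reading.

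One step in your sketch is wrong as written, however. You argue that no filtration shift can occur because a detecting class in filtration $\geq 2$ ``would be hit by a differential originating in filtration $\leq 1$'', and because $\Ext^{0,*}_{\mathcal{A}(p)_*}(\F_p,H_*E)$ is $\F_p$ concentrated in degree zero. Neither claim is right: Adams differentials $d_r$ \emph{raise} filtration by $r\geq 2$, so a class on the $1$-line is never the target of a differential --- the genuine danger is rather that $\mathcal{P}^kW$ (or $\beta\mathcal{P}^kW$) is already zero in $\mathrm{E}_2$ or supports a differential, in which case the operation applied to $w$ is detected in strictly higher filtration; and $\Ext^{0,*}_{\mathcal{A}(p)_*}(\F_p,H_*E)$ is the group of $\mathcal{A}(p)_*$-comodule primitives of $H_*E$, which need not vanish in positive degrees for the spectra $E=R_{n-1}$ to which the lemma is applied. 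The correct reading of Bruner's detection theorem is conditional: $\beta\mathcal{P}^{p^r}w$ is detected by $\beta\mathcal{P}^{p^r}W$ \emph{provided} that class is a non-zero permanent cycle, and in the paper this non-vanishing is not deduced formally from the hypotheses of Theorems~\ref{thm:PowOps2} and~\ref{thm:PowOps} but is established separately by the explicit cobar computations following Lemma~\ref{lem:Toda&cobar}. With that caveat, the rest of your argument is sound and matches the paper's (purely citational) proof.
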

\begin{proof}
This follows from work of Milgram and Bruner~\cites{RJM:GpReps&ASS,LNM1176}.
Note that for $p=2$,~\cite{LNM1176}*{theorem~2.5(i)} should
read
\[
\beta\mathcal{P}^i\:\Ext^{s,t} \lra \Ext^{s+t-i,2t}.
\qedhere
\]
\end{proof}

\section{Outline of a construction}\label{sec:Construction}

In this section and later ones, we will always be working
with (connective) $p$-local spectra for some prime~$p$.
When referring to cells, finite type conditions, etc, we
will always mean in that context.

Starting with the $p$-local sphere $S$, we will construct
a sequence of commutative $S$-algebras
\begin{equation}\label{eqn:SequenceRn}
S=R_0 \lra R_1 \lra \cdots \lra R_{n-1} \lra R_n \lra \cdots,
\end{equation}
where $R_n$ is obtained from $R_{n-1}$ by attaching a single
$E_\infty$ cell of dimension $2(p^n-1)$. The rational homotopy
of the colimit $\ds R_\infty=\colim_n R_n$ is
\[
\Q\otimes \pi_*R_\infty = \Q[u_n:n\geq1],
\]
where $u_n\in\pi_{2(p^n-1)}R_\infty$ arises in $\pi_{2(p^n-1)}R_n$.
Next we could inductively kill the torsion part of the homotopy
of~$R$ by non-trivally attaching $E_\infty$ cones on Moore
spectra, thus we do not change the rational homotopy. Then
we obtain a commutative $R_\infty$-algebra $R$ for which
\[
\Q\otimes \pi_*R = \Q\otimes \pi_*R = \Q[u_n:n\geq1].
\]

\section{Construction of the $R_n$}\label{sec:Rn}

We begin with the construction of the sequence~\eqref{eqn:SequenceRn}.
We will use the notation $u_0=p$.

Let $n\geq1$. Suppose that a sequence of cofibrations of commutative
$S$-algebras
\[
S=R_0 \lra R_1 \lra \cdots \lra R_{n-1}
\]
exists in which there are compatible homotopy elements $u_r\in\pi_{2(p^r-1)}R_k$
for $0\leq r\geq k$, satisfying
\[
\Q\otimes \pi_*R_k=\Q\otimes \pi_*R_{k-1}[u_k].
\]
Then by Theorem~\ref{thm:PowOps}, assuming that it is not trivial,
the element $\beta\mathcal{P}^{p^{n-1}}u_{n-1}$ is of order~$p$;
we let $f_n\:S^{2p^n-3}\lra R_{n-1}$ be a representative of this
homotopy class. Thus as in~\cites{EKMM,TAQ} we can form the pushout
diagram of commutative $S$-algebras
\[
\xymatrix{
\mathbb{P}S^{2p^n-3} \ar[r]^{\ph{abc}\tilde f_n}\ar[d] & R_{n-1}\ar[d] \\
\mathbb{P}D^{2p^n-2} \ar[r] & R_n
}
\]
in which $\tilde f_n$ is the extension of $f_n$ to a map from the
free commutative $S$-algebra $\mathbb{P}S^{2p^n-3}$. We remark that
we can work equally well with commutative $R_{n-1}$-algebras and
define $R_n$ using the pushout diagram
\[
\xymatrix{
\mathbb{P}_{R_{n-1}}S^{2p^n-3} \ar[r]^{\ph{abcd}\tilde f_n}\ar[d]\ar@{}[rd]
      & R_{n-1}\ar[d] \\
\mathbb{P}_{R_{n-1}}D^{2p^n-2} \ar[r] & R_n
}
\]
and we will make use of both viewpoints. We also have
\[
R_n \iso R_{n-1} \wedge_{\mathbb{P} S^{2p^n-3}}\mathbb{P} D^{2p^n-2}
\iso R_{n-1} \wedge_{\mathbb{P}_{R_{n-1}}S^{2p^n-3}}\mathbb{P}_{R_{n-1}}D^{2p^n-2}
\]

Since $f_n$ has order~$p$, there is a commutative diagram
of $R_{n-1}$-modules
\[
\xymatrix{
& & S^{2p^n-2}\ar[rd]^{p}\ar@{.>}[d]\ar@/_25pt/@{-->}[dd] & \\
S^{2p^n-3}\ar[r]^{f_n} & R_{n-1} \ar[r] & C_{f_n}\ar[r]\ar[d] & S^{2p^n-2} \\
 & & R_n &
}
\]
in which the dashed arrow provides a homotopy class
$u_n\in\pi_{2p^n-2}R_n$.

There is a Kunneth spectral sequence~\cite{EKMM} of
the form
\[
\mathrm{E}^2_{r,s} =
\Tor^{\Q\otimes\pi_*R_{n-1}[w_{2p^n-3}]}_{s,t}(\Q\otimes\pi_*R_{n-1},\Q\otimes\pi_*R_{n-1})
                                \Lra \Q\otimes \pi_{s+t}R_n,
\]
where
\[
\Q\otimes\pi_*R_{n-1}[w_{2p^n-3}] =
           \Q\otimes\pi_*\mathbb{P}_{R_{n-1}}S^{2p^n-3}
\]
is an exterior algebra, so
\[
\mathrm{E}^2_{r,s} = \Q\otimes\pi_*R_{n-1}[U_n]
\]
with generator $U_n$ of bidegree $(1,2p^n-3)$. Thus the spectral
sequence collapses and we easily obtain
\[
\Q\otimes\pi_*R_n = \Q\otimes\pi_*R_{n-1}[u_n].
\]

We still need to verify the following key result.
\begin{lem}\label{lem:Rn}
The element $\beta\mathcal{P}^{p^{n-1}}u_{n-1}\in\pi_{2p^n-3}R_{n-1}$
is non-zero and has order~$p$. Furthermore, the
mod~$p$ Hurewicz image of\/ $u_n$ is trivial.
\end{lem}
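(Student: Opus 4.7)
I would prove both parts together by induction on $n$, maintaining the inductive hypothesis that $u_{n-1}$ is detected on the $1$-line of the mod-$p$ Adams spectral sequence for $R_{n-1}$ by a specific class $U_{n-1}$ built from the new free generator in $H_{2p^{n-1}-2}(R_{n-1};\F_p)$ produced by Theorem~\ref{thm:Cone-homology}, and that $u_{n-1}$ itself has trivial mod-$p$ Hurewicz image.

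For the non-triviality in part one, I apply Lemma~\ref{lem:PowOps2-1line}: the class $\beta\mathcal{P}^{p^{n-1}}u_{n-1}$ is detected on the $1$-line by the algebraic power operation $\beta\mathcal{P}^{p^{n-1}}U_{n-1}$, whose chain-level representative is obtained by applying the Dyer--Lashof operation $\beta Q^{p^{n-1}}$ to the cochain representing $U_{n-1}$ in $\mathcal{A}(p)_*\otimes H_*R_{n-1}$. Iterating Theorem~\ref{thm:Cone-homology} expresses $H_*(R_{n-1};\F_p)$ as a free graded commutative algebra on admissible Dyer--Lashof monomials applied to the new generators in degrees $2p^k-2$ for $k=1,\ldots,n-1$. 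The excess condition of Theorem~\ref{thm:H*PX} is readily checked for $\beta Q^{p^{n-1}}$ applied to the degree-$(2p^{n-1}-2)$ generator, making the result a new free exterior generator in degree $2p^n-3$; consequently the detecting Ext class is non-zero, establishing non-triviality. The order-$p$ assertion is then direct from Theorem~\ref{thm:PowOps}.

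For the Hurewicz statement I analyse the cofiber sequence of $R_{n-1}$-modules
\[
S^{2p^n-3}\xrightarrow{f_n}R_{n-1}\to C_{f_n}\xrightarrow{\partial}S^{2p^n-2}.
\]
Since $f_n$ sits in positive Adams filtration, $(f_n)_*$ vanishes on mod-$p$ homology and $H_*(C_{f_n};\F_p)$ splits as $H_*(R_{n-1};\F_p)\oplus\F_p\{c\}$ with $c$ in degree $2p^n-2$. The defining lift $\phi\colon S^{2p^n-2}\to C_{f_n}$ satisfies $\partial\circ\phi=p$; taking $\partial_*$ modulo $p$ forces $\phi_*$ of the fundamental class to have vanishing $c$-coefficient, so it lies in the $H_*(R_{n-1};\F_p)$ summand. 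The subsequent map $C_{f_n}\to R_n$ sends $c$ to the new generator $u$ of Theorem~\ref{thm:Cone-homology} and embeds $H_*(R_{n-1};\F_p)$ into $H_*(R_n;\F_p)$, so the Hurewicz image of $u_n$ has zero coefficient on $u$ and lies entirely in the image of $H_*(R_{n-1};\F_p)\to H_*(R_n;\F_p)$.

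The main obstacle is then killing this residual $H_*(R_{n-1};\F_p)$-contribution. My plan is to exploit the indeterminacy in the lift $\phi$: two lifts differ by an element of the image of $\pi_{2p^n-2}R_{n-1}\to\pi_{2p^n-2}C_{f_n}$, and correspondingly two valid choices of $u_n$ differ by an element of the image of $\pi_{2p^n-2}R_{n-1}\to\pi_{2p^n-2}R_n$. The hard step will be to identify the image of the fundamental class under $\phi_*$ as the Hurewicz image of a specific class in $\pi_{2p^n-2}R_{n-1}$, so that subtracting this class yields a representative of $u_n$ with vanishing Hurewicz image. I expect this identification to require careful filtration bookkeeping for $\phi$ coming from the null-homotopy $p\cdot f_n\simeq 0$, combined with the inductive Hurewicz-triviality of the earlier $u_k$'s.
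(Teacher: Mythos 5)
Your overall strategy (an induction carrying a filtration-one detecting class for $u_{n-1}$ and pushing it forward with Lemma~\ref{lem:PowOps2-1line}) is the right one and is essentially what the paper does in Section~\ref{sec:Formulae}, but both halves of your argument have genuine gaps.

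For the non-triviality: you correctly state that $\beta\mathcal{P}^{p^{n-1}}u_{n-1}$ is detected by applying $\beta Q^{p^{n-1}}$ to the cocycle in $\mathcal{A}(p)_*\otimes H_*R_{n-1}$ representing $U_{n-1}$, but the computation you then describe takes place in $H_*(R_{n-1};\F_p)$: you apply $\beta Q^{p^{n-1}}$ to the degree-$(2p^{n-1}-2)$ homology generator and invoke the excess criterion. By your own inductive hypothesis $u_{n-1}$ has trivial Hurewicz image, so its representative is not that generator but a filtration-one cocycle of the form $\sum_{s}\bar\tau_s\otimes z_{n-1-s}^{p^s}$ (Lemma~\ref{lem:Toda&cobar}); the operation must be evaluated on this by the Cartan formula, combining Steinberger's formulas $Q^{p^s}\bar\tau_s=\bar\tau_{s+1}$, $\beta\bar\tau_{s+1}=\zeta_{s+1}$ on $\mathcal{A}(p)_*$ with $Q^{|x|/2}x=x^p$ on $H_*R_{n-1}$. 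The answer is $\sum_s\zeta_s\otimes z_{n-s}^{p^s}$, i.e.\ the representative of $\alpha_{[n]}$, and the free generator $\beta Q^{p^{n-1}}z_{n-1}$ you exhibit does not occur in it (a term $1\otimes x$ dies in the reduced cobar complex). Moreover, non-vanishing of a class in $\Ext^1$ is not the non-vanishing of a homology element: one must show the cocycle is not a coboundary $\bar\psi(y)$, and this is exactly where the explicit coaction formula~\eqref{eq:zr-coaction} and the absence of a ``$z_n$-like'' class in $H_*(R_{n-1};\F_p)$ (cf.\ Lemma~\ref{lem:BP-gens}) must enter. (The order-$p$ claim via Theorem~\ref{thm:PowOps} is fine.)

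For the Hurewicz statement you have only shown that the image lies in the $H_*(R_{n-1};\F_p)$ summand, and you acknowledge that you cannot kill the residual term. That residue is not automatically zero --- $H_{2p^n-2}(R_{n-1};\F_p)$ contains, for example, $Q^{p^{n-1}}z_{n-1}$ --- and adjusting the lift by elements of $\pi_{2p^n-2}R_{n-1}$ changes which class $u_n$ you end up constructing, so this route needs the hard identification you defer. The paper avoids it entirely: Lemma~\ref{lem:Toda&cobar} identifies $u_n$ with an element of the Toda bracket $\<p,\alpha_{[n]},1\>$ and computes a cobar representative via the Massey product $\<h_0,[\alpha_{[n]}],1\>$, placing $u_n$ in Adams filtration $\geq1$; this at once forces the mod~$p$ Hurewicz image (the filtration-zero part) to vanish.
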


Passing to the limit, we see that since each morphism
$R_{n-1}\lra R_n$ is a cofibration,
\[
R_\infty = \hocolim_n R_n
\]
and
\[
\pi_*R_\infty = \colim_n \pi_*R_n.
\]
Working rationally this gives
\[
\Q\otimes\pi_*R_\infty = \Q\otimes\pi_*S[u_n:n\geq1]
                       = \Q[u_n:n\geq1].
\]

\section{Killing the torsion}\label{sec:Killtorsion}

The homotopy of the commutative $S$-algebra $R_\infty$ has finite
type and $R$ is a CW commutative $S$-algebra with one $E_\infty$
cell in each degree of the form $2(p^n-1)$ with $n\geq1$.

Now we proceed to kill the torsion in $\pi_*R_\infty$ by induction
on degree. Let $R^0=R_\infty$. Suppose that we have constructed
$R^0\lra R^{m-1}$ so that $\pi_kR^{m-1}$ is torsion free for $k\leq m-2$
and the natural map induces an isomorphism
\[
\Q\otimes\pi_*R^0 \iso \Q\otimes\pi_*R^{m-1}.
\]
Now following~\cites{AJB&JPM,TAQ} we attach $m$-cells minimally
to kill the torsion of $\pi_{m-1}R^{m-1}$. In fact, following
a suggestion of Tyler Lawson, we can do slightly more: factoring
the attaching maps through Moore spectra of the form
$S^{m-1}\cup_{p^r}D^m$, we can define $R^m$ using the pushout
diagram
\[
\xymatrix{
\mathbb{P}(\bigvee_i S^{m-1}\cup_{p^{r_i}}D^m) \ar[r]\ar[d]
    & R^{m-1}\ar[d] \\
\mathbb{P}(\bigvee_i C(S^{m-1}\cup_{p^{r_i}}D^m))\ar[r] & R^m
}
\]
and so we have
\[
\Q\otimes\pi_*R^0 \iso \Q\otimes\pi_*R^m.
\]

Continuing in this way, we obtain a sequence of cofibrations
\[
R_\infty=R^0 \lra R^1\lra \cdots \lra R^{m-1} \lra R^m \lra \cdots
\]
whose limit is
\[
R = \colim_m R^m = \hocolim_m R^m.
\]
Furthermore, the natural map $R_\infty\lra R$ induces
an epimorphism
\[
\pi_*R_\infty\lra\pi_*R
\]
and a rational isomorphism
\[
\Q\otimes\pi_*R\lra \Q\otimes\pi_*R_\infty = \Q[u_n:n\geq1].
\]

\section{Some recursive formulae}\label{sec:Formulae}

We give the odd primary case first, the $2$-primary case is
similar.

\subsection*{The case $p>2$}

Let $p$ be an odd prime and assume that all spectra are $p$-local.
Starting with $R_0=S$, the $p$-local sphere, we will inductively
assume that there is a sequence of $E_\infty$ ring spectra
\[
S=R_0 \lra R_1 \lra \cdots \lra R_{n-1}
\]
so that the following hold: \\
(A) for $1\leq r\leq n$ there are homotopy classes
$\alpha_{[r]}\in\pi_{2(p^r-1)-1}R_{r-1}$ of order~$p$, and
homology classes $z_r\in H_{2(p^r-1)}(R_{r};\F_p)$; \\
(B) the $\mathcal{A}(p)_*$-coaction is given by
\begin{equation}\label{eq:zr-coaction}
\psi(z_r) =
   1\otimes z_r + \zeta_1\otimes z_{r-1}^p
    + \zeta_2\otimes z_{r-2}^{p^2}
    + \cdots + \zeta_{r-1}\otimes z_{1}^{p^{r-1}}
    + \zeta_r\otimes 1,
\end{equation}
where we identify $z_i\in H_*(R_{r};\F_p)$ with the image
of $z_i\in H_*(R_{i};\F_p)$ under the induced homomorphism
$H_*(R_{i};\F_p)\lra H_*(R_{r};\F_p)$ whenever $i<r$; \\
(C) $\alpha_{[r]}$ is detected in filtration $1$ the Adams
spectral sequence by the class with cobar representative
\begin{equation}\label{eq:alpha[r]}
   \zeta_1\otimes z_{r-1}^p
    + \zeta_2\otimes z_{r-2}^{p^2}
    + \cdots + \zeta_{r-1}\otimes z_{1}^{p^{r-1}}
    + \zeta_r\otimes 1,
\end{equation}
and $\alpha_{[1]}=\alpha_1\pi_{2p-3}S$.

Given this data, we construct the morphism of $E_\infty$
ring spectra $R_{n-1}\lra R_n$ as follows.

Choose a representative $f_n\:S^{2p^n-3}\lra R_{n-1}$ for
$\alpha_{[n]}$. Attach an $E_\infty$ cone to $R_{n-1}$ by
forming the pushout $R_{n-1}/\!/\alpha_{[n]}$ in the diagram
\[
\xymatrix{
\mathbb{P}S^{2p^n-3} \ar[r]^{\ph{abc}\tilde f_n}\ar[d] & R_{n-1}\ar[d] \\
\mathbb{P}D^{2p^n-2} \ar[r] & R_{n-1}/\!/\alpha_{[n]}
}
\]
and set $R_n=R_{n-1}/\!/\alpha_{[n]}$. Since $\alpha_{[n]}$
has order~$p$, there is a commutative diagram of $S$-modules
\[
\xymatrix{
& & S^{2p^n-2}\ar[rd]^{p}\ar@{.>}[d]\ar@/_25pt/@{-->}[dd] & \\
S^{2p^n-3}\ar[r]^{f_n} & R_{n-1} \ar[r] & C_{f_n}\ar[r]\ar[d] & S^{2p^n-2} \\
 & & R_n &
}
\]
in which the dashed arrow provides a homotopy class
$u_n\in\pi_{2p^n-2}R_n$. The homology class $z_n$ is
represented by the image of the ordinary cell attached
to form the mapping cone $C_{f_n}$.
\begin{lem}\label{lem:Toda&cobar}
The homotopy class $u_n$ lies in the Toda bracket
$\<p,\alpha_{[n]},1\>\subseteq \pi_{2(p^n-1)}R_n$, and
in the Adams spectral sequence it has filtration\/ $1$
and cobar representative
\[
\bar\tau_0\otimes z_n +\bar\tau_1\otimes z_{n-1}^p
    + \bar\tau_2\otimes z_{n-2}^{p^2}
    + \cdots + \bar\tau_{n-1}\otimes z_{1}^{p^{n-1}},
\]
where $\bar\tau_j$ denotes the conjugate of the exterior
generator $\tau_j\in\mathcal{A}(p)_{2p^j-1}$.
\end{lem}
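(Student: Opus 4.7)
My plan is to split the proof into two parts: first identifying $u_n$ as a Toda bracket, then using Moss's convergence theorem to compute its detecting class in the Adams spectral sequence.

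For the Toda bracket claim, I would unwind the diagram defining $u_n$. The two null-homotopies needed to form $\langle p,\alpha_{[n]},1_{R_n}\rangle$ are both visible in the construction: the relation $p\alpha_{[n]}=0$ in $\pi_*R_{n-1}$ (order~$p$ of $\alpha_{[n]}$) yields the dotted lift $S^{2p^n-2}\to C_{f_n}$ of the degree-$p$ self-map through the connecting map of the cofibre, and the composite $\alpha_{[n]}\cdot 1_{R_n}\colon S^{2p^n-3}\to R_{n-1}\to R_n$ is null because attaching the $E_\infty$ cell kills $\alpha_{[n]}$ in $\pi_*R_n$. By inspection of the defining diagram, $u_n$ is exactly the resulting Toda bracket element.

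For the filtration claim I would invoke Moss's convergence theorem for Toda brackets in the classical Adams spectral sequence. The bracket $\langle p,\alpha_{[n]},1\rangle$ is detected in filtration $1+1+0-1=1$ by the matching Massey product $\langle h_0,[\alpha_{[n]}],1\rangle$ in $\mathrm{E}_2^{1,2p^n-1}$, where $h_0=[\bar\tau_0]$ detects~$p$. One must verify that there is no convergence obstruction, i.e.\ that the indeterminacy of the Massey product in this bidegree lies in filtration $\geq 2$; this is routine by inspection of $\mathrm{Ext}^{*,*}_{\mathcal{A}(p)_*}(\F_p,H_*R_n)$ in the relevant range.

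Then I would compute the Massey product explicitly in the cobar complex of $H_*R_n$. By condition~(B), $\bar\psi(z_n)$ equals the cobar representative~(5.3) of $\alpha_{[n]}$, so $z_n$ itself is a cobounding $1$-cochain for $[\alpha_{[n]}]\cdot 1$, contributing $\bar\tau_0\otimes z_n$ to the Massey product. A cobounding $1$-cochain for $h_0\cdot[\alpha_{[n]}]$ is produced using the Milnor coproduct $\bar\Delta\bar\tau_j=\sum_{k<j}\bar\tau_k\otimes\zeta_{j-k}^{p^k}$ together with the Frobenius-twisted coaction $\bar\psi(z_{n-j}^{p^j})$ deduced from (5.1) (valid in characteristic $p$). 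After telescoping, this cobounder is $\bar\tau_1\otimes z_{n-1}^p+\cdots+\bar\tau_{n-1}\otimes z_1^{p^{n-1}}$, and assembling gives the stated representative.

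The principal obstacle is the combinatorial bookkeeping in the final step: one must match two double sums indexed by pairs $0\leq a<b\leq n$ (one from $\bar\Delta$ applied to the $\bar\tau_j$'s, one from $\bar\psi$ applied to the $z_{n-j}^{p^j}$'s) using the Milnor relations, while carefully tracking signs introduced by the odd degrees of the exterior generators $\bar\tau_j$. The resulting cancellation is the odd-primary analogue of the well-known computation of $v_n$-detecting classes in the cobar complex for $BP$, and the structure of the answer reflects the Hopf-algebroid identity expressing $\bar\psi(z_n)$ as a sum indexed by pairs $(j,n-j)$.
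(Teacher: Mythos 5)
Your proposal is correct and follows essentially the same route as the paper: identify $u_n$ with the Toda bracket read off from the defining diagram, pass to the Massey product $\langle h_0,[\alpha_{[n]}],1\rangle$ in filtration $1$ via Moss-type convergence, and evaluate it in the cobar complex using the coaction formula (B) together with the Milnor coproduct of the $\bar\tau_j$, the two cobounding cochains being $-z_n$ and $\sum_{1\leq s\leq n}\bar\tau_s\otimes z_{n-s}^{p^s}$ exactly as in the paper's displayed $d_1$-computations. The only difference is one of emphasis: you are more explicit than the paper about the convergence and indeterminacy hypotheses, while the paper carries out the telescoping double-sum cancellation that you defer.
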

\begin{proof}
This Toda bracket should be interpreted in the sense of
modules over $R_{n-1}$. Thus the first two variables are
in $\pi_*R_{n-1}$ while the last is in $\pi_*R_n$ viewed
as a module over $\pi_*R_{n-1}$.

Now in the Adams $\mathrm{E}_2$-term, we have the relation
\[
h_0[\zeta_1\otimes z_{n-1}^p
    + \zeta_2\otimes z_{n-2}^{p^2}
    + \cdots + \zeta_{n-1}\otimes z_{1}^{p^{n-1}}
    + \zeta_n\otimes 1] = 0
\]
since $p\alpha_{[n]}=0$, and using~\eqref{eq:zr-coaction}
we obtain
\begin{align*}
d_1\biggl(  \sum_{1\leq s\leq n}\bar\tau_s\otimes z_{n-s}^{p^s} \biggr)
=& \sum_{1\leq s\leq n} 1\otimes\bar\tau_s\otimes z_{n-s}^{p^s} \\
&\quad - \biggl(
   \sum_{\substack{1\leq s\leq n\\ 0\leq j\leq s}}
     \bar\tau_j\otimes\zeta_{s-j}^{p^j}\otimes z_{n-s}^{p^s}
     + \sum_{1\leq s\leq n} 1\otimes\bar\tau_s\otimes z_{n-s}^{p^s}
    \biggr) \\
&\ph{abcdefgj}
     + \biggl(\sum_{1\leq s\leq n} \bar\tau_s\otimes 1\otimes z_{n-s}^{p^s}
     + \sum_{\substack{1\leq s\leq n\\ 1\leq k\leq n-s}}
            \bar\tau_s\otimes\zeta_k^{p^s} \otimes z_{n-s-k}^{p^{s+k}}\biggr) \\
=&  - \sum_{\substack{1\leq s\leq n\\ 0\leq j\leq s-1}}
     \bar\tau_j\otimes\zeta_{s-j}^{p^j}\otimes z_{n-s}^{p^s}
   + \sum_{\substack{1\leq s\leq n\\ 1\leq k\leq n-s}}
        \bar\tau_s\otimes\zeta_k^{p^s} \otimes z_{n-s-k}^{p^{s+k}} \\
=&  - \sum_{1\leq s\leq n} \bar\tau_0\otimes\zeta_{s}\otimes z_{n-s}^{p^s} \\
&\ph{abcdefgh}  - \biggl(\sum_{\substack{1\leq s\leq n\\ 1\leq j\leq s-1}}
     \bar\tau_j\otimes\zeta_{s-j}^{p^j}\otimes z_{n-s}^{p^s}
   - \sum_{\substack{1\leq s\leq n\\ 1\leq k\leq n-s}}
        \bar\tau_s\otimes\zeta_k^{p^s} \otimes z_{n-s-k}^{p^{s+k}}\biggr) \\
=&  \sum_{1\leq s\leq n} -\bar\tau_0\otimes\zeta_{s}\otimes z_{n-s}^{p^s}.
\end{align*}
We also have
\begin{align*}
d_1(-z_n) =&  - 1\otimes z_n
  + \biggl(\sum_{1\leq s\leq n}\zeta_t\otimes z_{n-t}^{p^t}
                                          + 1\otimes z_n \biggr) \\
=&  \sum_{1\leq s\leq n}\zeta_s\otimes z_{n-s}^{p^s}.
\end{align*}
Therefore we have
\[
\biggl[\sum_{0\leq s\leq n}\bar\tau_s\otimes z_{n-s}^{p^s}\biggr] =
\biggl[\bar\tau_0\otimes z_n + \sum_{1\leq s\leq n}\bar\tau_s\otimes z_{n-s}^{p^s}\biggr]
\in \<h_0,\biggl[\sum_{1\leq s\leq n}\bar\tau_s\otimes z_{n-s}^{p^s}\biggr],1\>.
\]
So modulo higher Adams filtration, the Toda bracket $\<p,\alpha_{[n]},1\>$
is represented in the Adams spectral sequence by
\[
 \<h_0,\biggl[\sum_{1\leq s\leq n}\bar\tau_s\otimes z_{n-s}^{p^s}\biggr],1\>.
\qedhere
\]
\end{proof}

Note that Lemma~\ref{lem:PowOps2-1line}(ii) gives
\begin{align*}
\beta\mathcal{P}^{p^n}
\biggl[\sum_{0\leq s\leq n}\bar\tau_s\otimes z_{n-s}^{p^s}\biggr]
&= \biggl[\beta Q^{p^n}\sum_{0\leq s\leq n}\bar\tau_s\otimes z_{n-s}^{p^s}\biggr] \\
&= \biggl[\sum_{0\leq s\leq n}\beta Q^{p^s}(\bar\tau_s)\otimes Q^{p^n-p^s}(z_{n-s}^{p^s})\biggr] \\
&= \biggl[\sum_{0\leq s\leq n}\beta\bar\tau_{s+1}\otimes(Q^{p^{n-s}-1}z_{n-s})^{p^s}\biggr] \\
\intertext{\hfill (by \cite{LNM1176}*{theorem~III.2.3})}
&= \biggl[\sum_{0\leq s\leq n}\beta\bar\tau_{s+1}\otimes(z_{n-s}^p)^{p^s}\biggr] \\
&= \biggl[\sum_{0\leq s\leq n}\zeta_{s+1}\otimes z_{n-s}^{p^{s+1}}\biggr] \\
\intertext{\hfill (by \cite{LNM1176}*{theorem~III.2.3} again)}
&= \biggl[\sum_{1\leq s\leq n}\zeta_s\otimes z_{n+1-s}^{p^s}\biggr]  \\
&= \alpha_{[n+1]}.
\end{align*}

\subsection*{The case $p=2$}

With similar notation to that for odd primes, we have

\begin{lem}\label{lem:Toda&cobar2}
The element $u_n$ lies in the Toda bracket
$\<2,w_{n-1},1\>\subseteq \pi_{2^{n+1}-2}R(n)$, and
in the Adams spectral sequence it has filtration\/~$1$
with cobar representative
\[
\zeta_1\otimes s_n +\zeta_2\otimes s_{n-1}^2
    + \zeta_3\otimes s_{n-2}^{2^2}
    + \cdots + \zeta_n\otimes s_{1}^{2^{n-1}}
    + \zeta_{n+1}\otimes1,
\]
where $\zeta_j$ denotes the conjugate of the Milnor
generator generator $\xi_j\in\mathcal{A}(2)_{2^j-1}$.
\end{lem}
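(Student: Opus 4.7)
The plan is to follow the proof of Lemma~\ref{lem:Toda&cobar} step by step, substituting $w_{n-1}$ for $\alpha_{[n]}$ and $\zeta_{s+1}$ for $\bar\tau_s$, and taking into account that at $p=2$ the dual Steenrod algebra is polynomial in the conjugates $\zeta_j$ alone (no exterior part). First, I would interpret the Toda bracket $\<2, w_{n-1}, 1\>$ in the category of $R_{n-1}$-modules, the last entry being the unit morphism $R_{n-1} \to R_n$, which kills $w_{n-1}$ by construction of the pushout defining $R_n$. Since $2 w_{n-1} = 0$ by Theorem~\ref{thm:PowOps2}, the bracket is defined, and the dashed arrow in the defining diagram of $u_n$ (obtained from the cofiber sequence of $f_n$ and the order-$2$ condition) exhibits $u_n$ as a representative of this bracket.

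For the Adams spectral sequence assertion, I would work with two coaction formulae: the inductively assumed $2$-primary analog of~\eqref{eq:zr-coaction},
\[
\psi(s_r) = 1\otimes s_r + \zeta_1\otimes s_{r-1}^2 + \cdots + \zeta_{r-1}\otimes s_1^{2^{r-1}} + \zeta_r\otimes 1,
\]
together with the classical Milnor coproduct $\psi(\zeta_k) = \sum_{j=0}^{k} \zeta_j\otimes \zeta_{k-j}^{2^j}$. Setting $W_n = \sum_{s=0}^{n} \zeta_{s+1}\otimes s_{n-s}^{2^s}$ with the convention $s_0 = 1$, I would apply the Leibniz rule in the cobar complex to compute $d_1$ of the sub-sum $\sum_{s=1}^{n}\zeta_{s+1}\otimes s_{n-s}^{2^s}$ and separately compute $d_1(s_n) = \sum_{s=1}^{n}\zeta_s\otimes s_{n-s}^{2^s}$. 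After telescoping the resulting double sums exactly as in the odd-primary display of Lemma~\ref{lem:Toda&cobar}, I expect the cross terms to cancel and the total to equal $h_0$ multiplied by the cobar class $[\sum_{s=1}^{n+1}\zeta_s\otimes s_{n+1-s}^{2^{s-1}}]$ that inductively detects $w_{n-1}$. This exhibits $[W_n]$ as an element of the Massey product $\<h_0, [w_{n-1}\text{-rep}], 1\>$, which under the standard Massey-to-Toda comparison corresponds to $\<2, w_{n-1}, 1\>$.

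The main obstacle, as in Lemma~\ref{lem:Toda&cobar}, is the combinatorial bookkeeping of the double sums generated by the two coactions; at $p=2$ this is somewhat simpler because all signs disappear, but care is still needed to align the indices of summation and to keep track of which terms come from the coaction on $s$-classes versus those from the coaction on Milnor conjugates. A brief degree check then confirms that $W_n$ cannot be corrected by classes of strictly greater Adams filtration: the representative lies in the $1$-line, and in the relevant total degree the higher filtration lines of $\mathrm{E}_2$ (for $H_*R_n$ as a comodule) contain nothing that could alter the identification.
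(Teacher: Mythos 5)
Your overall strategy is the right one, and indeed it is all the paper itself offers: Lemma~\ref{lem:Toda&cobar2} is stated with no proof beyond the phrase ``with similar notation'', so transcribing the proof of Lemma~\ref{lem:Toda&cobar} is exactly what is intended, and your treatment of the Toda bracket itself is fine. However, the key input to your cobar computation is wrong as written: the $2$-primary analogue of~\eqref{eq:zr-coaction} is \emph{not} $\psi(s_r)=\sum_j\zeta_j\otimes s_{r-j}^{2^j}$. That formula is already degree-inconsistent --- its last term $\zeta_r\otimes1$ has odd degree $2^r-1$, whereas $s_r$ has even degree $2^{r+1}-2$ --- and likewise the class you propose as ``inductively detecting $w_{n-1}$'' has internal degree $2^{n+1}-1$ where $2^{n+1}-2$ is required (it is in fact a reindexing of $W_n$ itself, the representative of $u_n$, not of $w_{n-1}$). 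The correct formulae replace $\zeta_j$ by $\zeta_j^2$ throughout the comodule structure: $\psi(s_r)=\sum_{0\leq j\leq r}\zeta_j^2\otimes s_{r-j}^{2^j}$, and $w_{n-1}$ is detected by $\bigl[\sum_{1\leq s\leq n}\zeta_s^2\otimes s_{n-s}^{2^s}\bigr]$ with $s_0=1$. This is forced by the fact that at $p=2$ the $s_r$ map to $\zeta_r^2$, not $\zeta_r$, in $\mathcal{A}(2)_*$; compare Remark~\ref{rem:BP-gens} and the second proof of Theorem~\ref{thm:Rinfty->BP} (``with $\zeta_s^2$ in place of $\zeta_s$''). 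At $p=2$ the role of the exterior generator $\bar\tau_s$ is played by the odd-degree element $\zeta_{s+1}$, while the role of the polynomial generator $\zeta_s$ is played by $\zeta_s^2$, and conflating the two is the source of the error.

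This is not a cosmetic slip, because the telescoping cancellation you are relying on fails with your formula. In $d_1\bigl(\sum_{1\leq s\leq n}\zeta_{s+1}\otimes s_{n-s}^{2^s}\bigr)$ the cross terms coming from $\bar\psi(\zeta_{s+1})$ have the shape $\zeta_{t+1}\otimes\zeta_k^{2^{t+1}}\otimes s_{n-t-k}^{2^{t+k}}$, whereas with your coaction the cross terms coming from $\bar\psi(s_{n-s}^{2^s})$ have the shape $\zeta_{t+1}\otimes\zeta_k^{2^{t}}\otimes s_{n-t-k}^{2^{t+k}}$; the exponents $2^{t+1}$ and $2^t$ do not match, so nothing cancels. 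With the corrected coaction both families coincide and cancel mod~$2$, leaving exactly $\zeta_1\otimes\bigl(\sum_{1\leq s\leq n}\zeta_s^2\otimes s_{n-s}^{2^s}\bigr)$, that is, $h_0$ times the representative of $w_{n-1}$; together with $d_1(s_n)=\sum_{1\leq s\leq n}\zeta_s^2\otimes s_{n-s}^{2^s}$ this places $[W_n]$ in the Massey product $\<h_0,[w_{n-1}],1\>$ exactly as in the odd-primary case. Finally, note that Lemma~\ref{lem:Toda&cobar} only concludes ``modulo higher Adams filtration''; your additional claim that no higher-filtration class could interfere is neither needed for the statement nor justified as written.
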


\section{The map to $H\mathbb{F}_p$} \label{sec:R->HF_p}

There is a morphism of commutative $S$-algebras $R_\infty\lra MU_{(p)}$,
and composing this with the Quilen morphism of ring spectra
$\epsilon\:MU_{(p)}\lra BP$ we obtain morphisms of ring spectra

\smallskip
\[
\xymatrix{
R_\infty \ar[r]\ar@/^15pt/[rr] &  BP\ar[r] & H\F_p
}
\]
and we would like to understand their induced maps
in homotopy and homology.

\begin{lem}\label{lem:BP-gens}
Let $p$ be a prime. \\
\emph{(i)} If $p$ is odd, suppose that $s_1,s_2,s_2,\ldots$
is a sequence of elements $s_n\in\mathcal{A}(p)_{2(p^n-1)}$
with coproducts
\[
\psi(s_n) =
\zeta_n\otimes1 + \zeta_{n-1}\otimes s_1^{p^{n-1}}
+ \cdots + \zeta_1\otimes s_{n-1}^{p} + 1\otimes s_n.
\]
Then $s_n = \zeta_n$. \\
\emph{(ii)} If $p=2$, suppose that $s_1,s_2,s_2,\ldots$
is a sequence of elements $s_n\in\mathcal{A}(p)_{2^n-1}$
with coproducts
\[
\psi(s_n) =
\zeta_n\otimes1 + \zeta_{n-1}\otimes s_1^{2^{n-1}}
+ \cdots + \zeta_1\otimes s_{n-1}^2 + 1\otimes s_n.
\]
Then $s_n = \zeta_n$.
\end{lem}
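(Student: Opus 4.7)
The plan is to argue by induction on $n$, reducing both parts of the lemma to the standard observation that the only positive-degree element $x$ of a coaugmented coalgebra satisfying $\psi(x) = 1 \otimes x$ is $x = 0$. This follows at once from the counit axiom $(1 \otimes \epsilon) \circ \psi = \id$, together with $\epsilon(x) = 0$ in positive degree.

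For the base case $n=1$, the hypothesis reads $\psi(s_1) = \zeta_1 \otimes 1 + 1 \otimes s_1$, which compared with $\psi(\zeta_1) = \zeta_1 \otimes 1 + 1 \otimes \zeta_1$ gives $\psi(s_1 - \zeta_1) = 1 \otimes (s_1 - \zeta_1)$ and hence $s_1 = \zeta_1$. For the inductive step I would assume $s_i = \zeta_i$ for $1 \leq i < n$ and substitute into the hypothesised coproduct formula to obtain
\[
\psi(s_n) = \sum_{k=1}^{n} \zeta_k \otimes \zeta_{n-k}^{p^k} + 1 \otimes s_n,
\]
while the antipode applied to Milnor's formula $\psi(\xi_n) = \sum_{i=0}^{n} \xi_{n-i}^{p^i} \otimes \xi_i$, followed by the swap of tensor factors, delivers the coproduct of the conjugate generator,
\[
\psi(\zeta_n) = \sum_{k=0}^{n} \zeta_k \otimes \zeta_{n-k}^{p^k}.
\]
Subtracting yields $\psi(s_n - \zeta_n) = 1 \otimes (s_n - \zeta_n)$, and the counit argument concludes that $s_n = \zeta_n$. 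Part (ii) is the identical calculation with $p = 2$ throughout.

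I anticipate no real obstacle: the only point requiring care is confirming that the hypothesised ansatz matches the Milnor coproduct of the \emph{conjugate} generators $\zeta_n$ (with the $p^k$-th powers on the right-hand tensor factor), rather than of the $\xi_n$ themselves. Once this bookkeeping is in place, the whole lemma becomes a two-line induction in pure coalgebra, and no finer information about the dual Steenrod algebra beyond its Hopf structure is required.
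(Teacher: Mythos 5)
Your proof is correct and follows essentially the same route as the paper: induction on $n$, comparison with the Milnor coproduct of the conjugate generators $\psi(\zeta_n)=\sum_k\zeta_k\otimes\zeta_{n-k}^{p^k}$, and the observation that a positive-degree element with $\psi(x)=1\otimes x$ must vanish. The only cosmetic difference is that you justify this last vanishing via the counit axiom, whereas the paper cites the change-of-rings computation $\Ext^{0,*}_{\mathcal{A}(p)_*}(\F_p,\mathcal{A}(p)_*)\iso\F_p$; both are valid.
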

\begin{proof}
We recall that there are no non-trivial coaction primitives
in positive degrees, \ie, viewing $\mathcal{A}(p)_*$ as a
left $\mathcal{A}(p)_*$-comodule, a standard change of rings
isomorphism gives
\[
\Ext^{0,*}_{\mathcal{A}(p)_*}(\F_p,\mathcal{A}(p)_*)
\iso \Ext^{0,*}_{\F_p}(\F_p,\F_p) = \F_p.
\]
(i) For $n=1$, we have
\begin{align*}
\psi(s_1-\zeta_1) &=
(\zeta_1\otimes1+1\otimes s_1)-(\zeta_1\otimes1+1\otimes \zeta_1) \\
&= 1\otimes(s_1-\zeta_1).
\end{align*}
So $s_1=\zeta_1$.

Now suppose that for $k<n$, $s_k=\zeta_k$. Then
\begin{align*}
\psi(s_n-\zeta_n) &=
\sum_{0\leq j\leq n}\zeta_j\otimes s_{n-j}^{p^j}
- \sum_{0\leq j\leq n}\zeta_j\otimes \zeta_{n-j}^{p^j} \\
&= 1\otimes(s_n-\zeta_n),
\end{align*}
so we have $s_n=\zeta_n$. By induction this holds
for all $n$.

The proof of (ii) is similar.
\end{proof}
\begin{rem}\label{rem:BP-gens}
Since $H_*(BP;\F_p)$ can be identified with a subalgebra
of $\mathcal{A}(p)_*$, we can also characterize a family
of polynomial generators $t_n\in H_{2(p^n-1)}(BP;\F_p)$
by the coaction formulae
\[
\psi(t_n) =
\begin{cases}
\ds\sum_{0\leq j\leq n} \zeta_j\otimes t_{n-j}^{p^j} & \text{if $p$ is odd}, \\
\ds\sum_{0\leq j\leq n} \zeta_j^2\otimes t_{n-j}^{2^j} & \text{if $p=2$}.
\end{cases}
\]
\end{rem}

\begin{thm}\label{thm:Rinfty->BP}
The morphism of ring spectra $R_\infty\lra BP$ induces
epimorphisms in $\pi_*(-)$, $H_*(-;\Z_{(p)})$ and $H_*(-;\F_p)$.
\end{thm}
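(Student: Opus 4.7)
The plan is to establish the three surjectivity claims in sequence: first mod-$p$ homology, then promote to $\Z_{(p)}$-homology by Nakayama, then tackle homotopy via the Adams spectral sequence for $BP$.

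\emph{Mod-$p$ homology.} The classes $z_n\in H_{2(p^n-1)}(R_\infty;\F_p)$ (and their $p=2$ analogues $s_n$) built in Section~\ref{sec:Formulae} satisfy the coaction formula~\eqref{eq:zr-coaction} (respectively its $p=2$ version). Composing with $H_*(R_\infty;\F_p)\to H_*(BP;\F_p)\hookrightarrow\mathcal{A}(p)_*$ via $BP\to H\F_p$, the images still obey this coaction, so Lemma~\ref{lem:BP-gens} forces them to coincide with $\zeta_n$, i.e., with the standard polynomial generators $t_n$ of $H_*(BP;\F_p)$ recorded in Remark~\ref{rem:BP-gens}. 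Since $R_\infty\to BP$ is a ring map and the $t_n$ generate $H_*(BP;\F_p)$ as an $\F_p$-algebra, mod-$p$ surjectivity follows.

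\emph{$\Z_{(p)}$-homology.} Because $H_*(BP;\Z_{(p)})$ is torsion-free and of finite type, $H_*(BP;\F_p)\cong H_*(BP;\Z_{(p)})\otimes_{\Z_{(p)}}\F_p$. Combined with the finite type of $R_\infty$, degreewise Nakayama applied to the cokernel of $H_*(R_\infty;\Z_{(p)})\to H_*(BP;\Z_{(p)})$ lifts the mod-$p$ surjectivity to a $\Z_{(p)}$-surjectivity.

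\emph{Homotopy.} I would use the mod-$p$ Adams spectral sequence for $BP$. Via the change of rings coming from the Hopf-algebra quotient $\mathcal{A}(p)_*\twoheadrightarrow E$ (with $E=E(\bar\tau_0,\bar\tau_1,\dots)$ for $p$ odd, or $E=\mathcal{A}(2)_*/(\zeta_i^2)=E(\zeta_i)$ for $p=2$), $\mathrm{E}_2$ is a polynomial ring $\F_p[h_i:i\geq 0]$ that collapses, with $h_n$ detecting the Hazewinkel generator $v_n$ (including $v_0=p$). Lemma~\ref{lem:Toda&cobar} (or Lemma~\ref{lem:Toda&cobar2}) expresses the image of $u_n$ in Adams filtration~$1$ with cobar representative $\sum_{0\leq s\leq n-1}\bar\tau_s\otimes t_{n-s}^{p^s}$ (or its $\zeta_j$-variant). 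The crucial observation is that coboundaries from $C^0=H_*(BP;\F_p)$ have first tensor factor landing in the polynomial subalgebra $\F_p[\zeta_i]\subset\mathcal{A}(p)_*$ (respectively $\F_p[\zeta_i^2]$), so any cobar $1$-cochain whose first tensor factors lie outside that subalgebra is automatically a nontrivial class. Hence the image of $u_n$ is a nonzero element of the one-dimensional group $\Ext^{1,2p^n-1}_{\mathcal{A}(p)_*}(\F_p,H_*(BP;\F_p))=\F_p\cdot h_n$, so the image of $u_n$ equals $c_nv_n$ modulo higher Adams filtration for some $c_n\in\F_p^{\times}$. Modulo $p$, higher Adams filtration in stem $2(p^n-1)$ consists of monomials in $h_1,\dots,h_{n-1}$, i.e., decomposables in $v_1,\dots,v_{n-1}$, so upper-triangular change of basis shows the images of the $u_n$ generate $\pi_*BP/(p)=\F_p[v_1,v_2,\dots]$ as an $\F_p$-algebra. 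Degreewise Nakayama then promotes this to $\Z_{(p)}$-algebra surjectivity, and the ring-map property of $R_\infty\to BP$ completes the argument.

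The hard part will be this final Adams-detection step: explicitly confirming that the cobar representative from Section~\ref{sec:Formulae} is not a coboundary. This reduces to the polynomial-subalgebra observation above, but care is needed in bookkeeping the indices (and ensuring the class pairs nontrivially with $h_n$ under the change of rings), especially at $p=2$ where Lemma~\ref{lem:Toda&cobar2} carries the extra term $\zeta_{n+1}\otimes 1$ and the quotient Hopf algebra is $\mathcal{A}(2)_*/(\zeta_i^2)$ rather than an exterior algebra on $\bar\tau_i$'s.
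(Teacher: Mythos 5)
Your argument is essentially correct and, for the homology statements, follows the same route as the paper: the paper's ``first proof'' identifies the images of the $z_n$ via the coaction formula~\eqref{eq:zr-coaction} and Lemma~\ref{lem:BP-gens}, exactly as you do (it also records a second proof that instead propagates $z_1\mapsto t_1$ by Dyer--Lashof operations $Q^{p^{s-1}}\cdots Q^p$). For the homotopy statement the key input is the same --- the filtration-one cobar representative of Lemma~\ref{lem:Toda&cobar} (resp.\ Lemma~\ref{lem:Toda&cobar2}) --- but your endgame differs: you stay inside the Adams spectral sequence for $BP$, use the change of rings to $\Ext_E(\F_p,\F_p)=\F_p[h_i]$ to see that the one-dimensional group $\Ext^{1,2p^n-1}$ forces the image of $u_n$ to be $c_nv_n$ modulo higher filtration, and then generate $\pi_*BP/(p)$ by an upper-triangular argument. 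The paper instead converts the same cocycle into a statement about the integral Hurewicz image ($pt_n$ modulo decomposables) and invokes Milnor's criterion for polynomial generators. Both are valid; your version makes the nonvanishing of the detecting class explicit (via the observation that coboundaries have first tensor factors in $\F_p[\zeta_i]$, resp.\ $\F_2[\zeta_i^2]$), which the paper leaves implicit.

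Two small points to tighten. First, in the $\Z_{(p)}$-homology step, surjectivity of $H_*(R_\infty;\F_p)\to H_*(BP;\F_p)$ does not by itself give vanishing of $\coker\bigl(H_*(R_\infty;\Z_{(p)})\to H_*(BP;\Z_{(p)})\bigr)\otimes\F_p$, because a mod-$p$ class could live in the $\Tor$-summand of universal coefficients; you need the generators $t_n$ to be hit by reductions of integral classes. This is true here --- the $z_n$ are by construction images of the integral fundamental classes of the cells of $C_{f_n}$, which is exactly the remark the paper makes (``the $z_s$ all lift to elements of $H_*(R_\infty;\Z_{(p)})$'') --- but your Nakayama sentence should say it. Second, at $p=2$ the classes lie in degree $2(2^n-1)$ and map to $\zeta_n^2=t_n$, so strictly one needs the ``squared'' variant of Lemma~\ref{lem:BP-gens}(ii) (coproducts $\sum\zeta_j^2\otimes s_{n-j}^{2^j}$ as in Remark~\ref{rem:BP-gens}); the paper glosses this in the same way, so it is not a defect relative to the source, just a place to be careful with the indexing you already flagged.
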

\begin{proof}
We indicate two rather different proofs. \\
\textbf{First proof:}
The morphism of ring spectra $R_\infty\lra BP\lra H\F_p$
induces a homorphism in homology sending the elements
$z_n$ to elements $s_n\in\mathcal{A}(p)_*$ for which
Lemma~\ref{lem:BP-gens} applies. By Remark~\ref{rem:BP-gens},
this means that $z_n\mapsto \zeta_n$ if $p$ is odd, and
$z_n\mapsto \zeta_n^2$ if $p=2$. \\
\textbf{Second proof:}
First assume that $p$ is odd. Consider the morphism of ring
spectra $R_1\lra R_\infty\lra BP$. The $z_1\in H_{2(p-1)}(R_1;\F_p)$
maps to an element $t\in H_{2(p-1)}(BP;\F_p)$ with
$\mathcal{A}_*$-coaction
\[
\psi(t) = \zeta_1\otimes 1 + 1\otimes t.
\]
The only such element is $t_1$.

The homomorphism $H_*(BP;\F_p)\lra H_*(H\F_p;\F_p)=\mathcal{A}(p)_*$.
Also $MU\lra H\F_p$ is a morphism of commutative $S$-algebras whose
image is $H_*(BP;\F_p)\subseteq H_*(H\F_p;\F_p)$. Therefore the
action of the Dyer-Lashof operations on $H_*(H\F_p;\F_p)$ restricts
to $H_*(BP;\F_p)$. Now $t_r$ maps to $\zeta_r$, so we can determine
the Dyer-Lashof action using~\cite{LNM1176}*{theorem~III.2.3}. Then
\[
Q^{p^{s-1}}\cdots Q^{p^2}Q^p\zeta_1 = \zeta_s,
\]
hence
\[
Q^{p^{s-1}}\cdots Q^{p^2}Q^pt_1 = t_s.
\]
Thus the element $Q^{p^{s-1}}\cdots Q^{p^2}Q^pz_1\in H_*(R_1;\F_p)$
maps to $t_s\in H_*(BP;\F_p)$. Since
\[
H_*(BP;\F_p) = \F_p[t_s:s\geq1],
\]
we see that $H_*(R_1;\F_p)\lra H_*(BP;\F_p)$ is epic, hence
so is $H_*(R_\infty)\lra H_*(BP;\F_p)$.

In fact the $z_s$ all lift to elements of $H_*(R_\infty;\Z_{(p)})$
and it easily follows that $H_*(R_\infty;\Z_{(p)})\lra H_*(BP;\Z_{(p)})$
is epic.

For $p=2$, the arguments are similar, but with $\zeta_s^2$ in
place of $\zeta_s$, and $Q^{2^{r+1}}$ in place of $Q^{p^r}$
throughout.

To show that the induced homomorphism $\pi_*R_\infty\lra\pi_*BP$
in homotopy is epic, we need to verify that a family of polynomial
generators for $\pi_*BP$ is in the image. When~$p$ is odd,
Lemma~\ref{lem:Toda&cobar} together with the above discussion,
shows that in the Adams spectral sequence for $\pi_*BP$, $u_n$
maps to an element represented by
\[
\biggl[\bar\tau_0\otimes t_n +\bar\tau_1\otimes t_{n-1}^p
    + \bar\tau_2\otimes t_{n-2}^{p^2}
    + \cdots + \bar\tau_{n-1}\otimes t_{1}^{p^{n-1}}\biggr]
    \in \Ext_{\mathcal{A}(p)_*}^{1,2p^n-1}(\F_p,H_*(BP;\F_p))
\]
which correspond to a homotopy element with Hurewicz image
in $H_*(BP;\Z_{(p)})$ of the form
\[
pt_n \pmod{p,\mathrm{decomposables}}.
\]
By Milnor's criterion, this is a polynomial generator.

The argument for $p=2$ is similar, with $u_n$ mapping to an
element having cobar representative
\[
[\zeta_1\otimes t_n + \zeta_2\otimes t_{n-1}^2
    + \zeta_3\otimes t_{n-2}^{2^2} + \cdots + \zeta_{n+1}\otimes1]
   \in \Ext_{\mathcal{A}(2)_*}^{1,2^n-1}(\F_2,H_*(BP;\F_2)).
\qedhere
\]
\end{proof}

\section{Relationship to $BP$}\label{sec:BP?}

We start with an easy lemma. For an abelian group $G$,
we write $\tors G$ for the torsion subgroup.
\begin{lem}\label{lem:Moorespaceconing}
Let $Y\xrightarrow{g} Z$ be a fibration of $p$-local
spectra and let
\[
\xymatrix{
X \ar[r]^{f}\ar[d] & Y\ar[d]^{g}  \\
{*} \ar[r] & Z \ar@{}[ul]|{\PB}
}
\]
be a pullback square. Assume that the following hold:
\begin{itemize}
\item
$f_*\:\pi_*(X)\lra\pi_*(Y)$ is monic;
\item
$\tors\pi_*(Z)=0$;
\item
$\tors\pi_*(X)=\pi_*(X)$.
\end{itemize}
Suppose that $\alpha\in\tors\pi_m(Y)$ is non-zero and
has order~$p^e$. Then there is a map
\[
u\:S^m\cup_{p^e}D^{m+1}\lra X
\]
for which the composition
\[
\xymatrix{
S^m \ar[r]^(.3){\inc}\ar@/_18pt/[rrr] &
S^m\cup_{p^e}D^{m+1}\ar[r]^(.7){u} &
X\ar[r]^{f} & Y
}
\]
represents $\alpha$.
\end{lem}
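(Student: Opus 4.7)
The plan is to lift $\alpha$ along $f_*$ to a homotopy class $\beta\in\pi_m(X)$, verify that $\beta$ has order exactly $p^e$, and then extend a representative of $\beta$ over the Moore spectrum $S^m\cup_{p^e}D^{m+1}$ via the universal property of the mapping cone on $p^e\:S^m\lra S^m$.

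Since the square is a pullback along $*\lra Z$, the map $f$ sits in a fibre sequence $X\xrightarrow{\;f\;} Y\xrightarrow{\;g\;} Z$, yielding a long exact sequence of homotopy groups
\[
\cdots\lra \pi_m(X)\xrightarrow{\;f_*\;}\pi_m(Y)\xrightarrow{\;g_*\;}\pi_m(Z)\lra\cdots.
\]
The element $g_*(\alpha)\in\pi_m(Z)$ is torsion because $\alpha$ is, and $\tors\pi_m(Z)=0$, so $g_*(\alpha)=0$; by exactness there is some $\beta\in\pi_m(X)$ with $f_*(\beta)=\alpha$, and $\beta$ is unique because $f_*$ is monic. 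The same injectivity pins down the order of $\beta$ to be exactly $p^e$: on the one hand $f_*(p^e\beta)=p^e\alpha=0$ forces $p^e\beta=0$, and on the other $k\beta=0$ with $0<k<p^e$ would give $k\alpha=f_*(k\beta)=0$, contradicting the assumption that $\ord(\alpha)=p^e$.

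Finally, I would choose any representative $\tilde\beta\:S^m\lra X$ of $\beta$. Since $p^e\beta=0$, the composition $\tilde\beta\o(p^e\cdot\id_{S^m})$ is nullhomotopic, so $\tilde\beta$ factors through the cofibre of $p^e\:S^m\lra S^m$, producing a map $u\:S^m\cup_{p^e}D^{m+1}\lra X$ with $u\o\inc=\tilde\beta$. Then $f\o u\o\inc=f\o\tilde\beta$ represents $f_*(\beta)=\alpha$, as required.

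The argument is essentially formal once the correct lift $\beta$ is identified, and no substantial technical obstacle arises. The only point that genuinely uses the hypotheses is matching the order of the lift with that of $\alpha$: torsion-freeness of $\pi_*(Z)$ is needed to produce the lift at all (so $g_*(\alpha)$ is forced to vanish), while injectivity of $f_*$ is used twice, once for uniqueness of $\beta$ and once to conclude that $\ord(\beta)=\ord(\alpha)$. The third hypothesis $\tors\pi_*(X)=\pi_*(X)$ plays no role in this particular lemma but records the intended context of application.
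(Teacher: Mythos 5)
Your proof is correct and follows essentially the same route as the paper: the paper simply asserts that the hypotheses make $f_*\:\pi_*(X)\to\tors\pi_*(Y)$ an isomorphism and then factors a representative of the lift through the Moore spectrum, while you spell out the lifting step via the long exact sequence of the fibre sequence $X\to Y\to Z$. Your side remark is also accurate: the hypothesis $\tors\pi_*(X)=\pi_*(X)$ is not needed for this particular conclusion (injectivity of $f_*$ already forces the lift to be torsion of the same order), and is recorded for the intended application.
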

\begin{proof}
By assumption, $f_*$ induces an isomorphism
\[
f_*\:\pi_*(X) \xrightarrow{\;\iso\;}\tors\pi_*(Y).
\]
hence there is a unique element $\alpha'\in\pi_m(X)$
for which $f_*(\alpha')=\alpha$ and the order of
$\alpha'$ is also $p^e$. A representative of $\alpha'$
must factor through $S^m\cup_{p^e}D^{m+1}$,
\bigskip
\[
\xymatrix{
S^m\ar[r]\ar@/^20pt/[rr]\ar@/_10pt/@{-->}[drr]
  & S^m\cup_{p^e}D^{m+1}\ar[r]_(.7){u} & X\ar[d]^{f} \\
  && Y
}
\]
showing that the desired $u$ exists, and the dashed
arrow represents $\alpha$.
\end{proof}
\begin{cor}\label{cor:Moorespaceconing}
The map $g$ factors through the mapping cone of $fu$.
\[
\xymatrix{
S^m\cup_{p^e}D^{m+1}\ar[r]^(.7){fu} & Y\ar[r]\ar[d]_{g}
    & \mathrm{C}_{fu} \ar@{-->}[dl] \\
& Z &
}
\]
\end{cor}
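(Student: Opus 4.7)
The strategy is to observe that the composition $g\circ (fu)$ is null-homotopic, and then use the universal property of the cofibre sequence associated with $fu$ to produce the desired factorisation.

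First I would note that the pullback square in Lemma~\ref{lem:Moorespaceconing} presents $X$ as the (homotopy) fibre of $g$, since one of the corners is~$*$. Consequently the composite $g\circ f\: X \lra Z$ is null-homotopic, with a canonical null-homotopy induced by the pullback structure. Composing on the right with $u\: S^m\cup_{p^e}D^{m+1}\lra X$, the map
\[
g\circ f\circ u \: S^m\cup_{p^e}D^{m+1}\lra Z
\]
is therefore also null-homotopic.

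Next I would invoke the cofibre sequence
\[
S^m\cup_{p^e}D^{m+1}\xrightarrow{\;fu\;} Y\lra \mathrm{C}_{fu}\lra \Sigma(S^m\cup_{p^e}D^{m+1}),
\]
which in the stable homotopy category yields a long exact Puppe sequence of pointed sets of homotopy classes of maps into any spectrum~$Z$. Since $[g]\in [Y,Z]$ satisfies $[g]\circ[fu]=0$ by the previous step, exactness provides a class $[h]\in[\mathrm{C}_{fu},Z]$ whose pullback along $Y\lra \mathrm{C}_{fu}$ is $[g]$. Choosing a representative $h\:\mathrm{C}_{fu}\lra Z$ then gives the desired dashed arrow, making the triangle in the statement commute up to homotopy.

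There is no genuine obstacle here, as the argument is a direct application of the universal property of the cofibre and the identification of $X$ as the fibre of $g$. The only point requiring a moment of care is that commutativity is meant in the (stable) homotopy category, so the null-homotopy of $g\circ f$ coming from the pullback square is what is used to produce the factorisation.
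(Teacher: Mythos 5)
Your proposal is correct and follows essentially the same route as the paper: both arguments rest on the observation that $g\circ f$ is trivial because $X$ is the fibre of $g$ over the basepoint, so $g\circ f\circ u$ is null and $g$ extends over $\mathrm{C}_{fu}$ by the cofibre sequence. The paper simply makes the extension explicit (sending the cone to the basepoint) rather than quoting exactness of the Puppe sequence, but this is the same argument.
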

\begin{proof}
This follows from the commutative diagram
\[
\xymatrix{
S^m\cup_{p^e}D^{m+1}\ar[r]^(.7){u}\ar@{=}[d] & Y\ar[r]\ar[d]_{f}
    & \mathrm{C}_{u} \ar[d] \\
S^m\cup_{p^e}D^{m+1}\ar[r]^(.7){fu} & Y\ar[r]\ar[d]_{g}
    & \mathrm{C}_{fu} \ar@{-->}[dl] \\
& Z &
}
\]
in which $gf$ is the trivial map and the dashed arrow
is obtained by mapping the cone trivially.
\end{proof}

\begin{thm}\label{thm:R=BP}
Let $p$ be a prime. If $BP$ admits an $E_\infty$ structure
then there is a weak equivalence of commutative $S$-algebras
$R\xrightarrow{\;\sim\;}BP$.
\end{thm}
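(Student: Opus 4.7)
The plan is to exploit the torsion-freeness of $\pi_*BP$, together with the assumed $E_\infty$ structure, to construct an $E_\infty$ morphism $\phi\:R\lra BP$ inductively along the cellular structure of $R$, and then to show $\phi$ is a $\pi_*$-isomorphism. All spectra are implicitly $p$-local.

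First I would construct an $E_\infty$ morphism $g_\infty\:R_\infty\lra BP$ extending the unit $S\lra BP$. Inductively, given $g_{n-1}\:R_{n-1}\lra BP$, the pushout defining $R_n=R_{n-1}/\!/\alpha_{[n]}$ admits an extension $g_n$ precisely when the composite $S^{2p^n-3}\xrightarrow{f_n}R_{n-1}\xrightarrow{g_{n-1}}BP$ is null. Since $\alpha_{[n]}$ has order $p$ by Theorems \ref{thm:PowOps} and \ref{thm:PowOps2}, and $\pi_*BP$ has no $p$-torsion, the composite vanishes and $g_n$ exists. Passing to the homotopy colimit yields $g_\infty$, an $E_\infty$ map lifting the composition $R_\infty\lra MU_{(p)}\lra BP$ up to weak equivalence.

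I would next extend $g_\infty$ along the torsion-killing tower $R_\infty=R^0\lra R^1\lra\cdots$ of Section \ref{sec:Killtorsion} to obtain $\phi\:R\lra BP$. At the stage $R^{m-1}\lra R^m$, each Moore-spectrum attaching map $S^{m-1}\cup_{p^{r_i}}D^m\lra R^{m-1}$ represents a torsion element of $\pi_{m-1}R^{m-1}$ whose image in $\pi_{m-1}BP$ vanishes. Inductively, $\pi_*R^{m-1}\lra\pi_*BP$ remains surjective (seeded by Theorem \ref{thm:Rinfty->BP} and preserved since we only kill torsion classes, which map to zero). Consequently Lemma \ref{lem:Moorespaceconing}, applied with $Y=R^{m-1}$, $Z=BP$, and $X$ the homotopy fibre of $g^{m-1}$, lets us arrange each attaching map to factor through $X$. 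Corollary \ref{cor:Moorespaceconing} then nullhomotopes the composite $S^{m-1}\cup_{p^{r_i}}D^m\lra BP$, so freeness of $\mathbb{P}$ makes $g^{m-1}$ extend over the $E_\infty$ cone to $g^m\:R^m\lra BP$, and the colimit is $\phi$.

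Finally I would verify $\phi$ is a weak equivalence. The diagram $R_\infty\lra R\lra BP$ commutes, so Theorem \ref{thm:Rinfty->BP} makes $\pi_*\phi$ surjective; rationally, $\pi_*R\otimes\Q=\Q[u_n:n\geq1]=\pi_*BP\otimes\Q$ by construction, so $\phi$ is a rational equivalence. Since both $\pi_*R$ and $\pi_*BP$ are torsion-free $\Z_{(p)}$-modules, the kernel of $\pi_*\phi$ is torsion-free (as a submodule of $\pi_*R$) and vanishes rationally, hence is zero, so $\phi$ is a $\pi_*$-isomorphism and therefore a weak equivalence of $E_\infty$ ring spectra. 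The main obstacle is the second step, where the Moore-spectrum attaching maps for the torsion-killing tower must be chosen compatibly with $\phi$; this forces us to construct $R$ and $\phi$ in tandem rather than starting from a fixed $R$, and it is exactly what Lemma \ref{lem:Moorespaceconing} and Corollary \ref{cor:Moorespaceconing} are designed to make possible.
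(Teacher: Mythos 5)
Your proposal is correct and follows essentially the same route as the paper: build the $E_\infty$ map $R_\infty\to BP$ using torsion-freeness of $\pi_*BP$, then extend it in tandem with the torsion-killing tower via Lemma~\ref{lem:Moorespaceconing} and Corollary~\ref{cor:Moorespaceconing} (the paper's tower $T^m$ is exactly your $(R^m,g^m)$), and conclude by comparing homotopy groups. Your closing step (surjective, rationally an isomorphism, torsion-free source, hence an isomorphism on $\pi_*$) simply makes explicit what the paper leaves as ``easily seen.''
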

\begin{proof}
Since $\pi_*BP$ is torsion-free, the inductive construction
of $R_n$ from $R_{n-1}$ gives morphisms of commutative
$S$-algebras
\[
\xymatrix{
R_{n-1}\ar[rr]\ar[dr] && R_n\ar@{.>}[dl] \\
 & BP &
}
\]
and passing to the colimit we obtain a morphism $R_\infty\lra BP$.
By Theorem~\ref{thm:Rinfty->BP}, $\pi_*R_\infty\lra\pi_*BP$ is an
epimorphism and on tensoring with $\Q$ it becomes an isomorphism.

On replacing $R_\infty\lra BP$ with a fibration of commutative
$S$-algebras $T^0\lra BP$ with fibre $J_0$, we are in the situation
of Lemma~\ref{lem:Moorespaceconing}. Now we can inductively adjoin
cones on wedges of Moore spectra $S^m\cup_{p^e}D^{m+1}$ where
$m\geq1$ to form morphisms of $E_\infty$ ring spectra~$T^{m-1}\lra T^m$.
At each stage Corollary~\ref{cor:Moorespaceconing} shows that we
can extend to a diagram of morphisms
\[
\xymatrix{
T^0 \ar[r]\ar[drrr] & T^1 \ar[r]\ar[drr] & \cdots\cdots \ar[r]
     & T^{m-1}\ar[r]\ar[d] & T^m\ar[r]\ar[dl] &\cdots \\
 &&&BP&&
}
\]
and the homotopy colimit $\hocolim_m T^m$ is easily
seen to admit a weak equivalence to
\[
\hocolim_m T^m\xrightarrow{\;\sim\;} BP.
\]
As $R\sim\hocolim_m T^m$, this shows that $R\sim BP$.
\end{proof}

As defined, it is not clear if $R$ is a minimal atomic commutative
$S$-algebra; however, by construction, $R_\infty$ is nuclear
and hence is minimal atomic according to results of~\cite{TAQ}.
We can produce a core $R^c\lra R$, \ie, a morphism of commutative
$S$-algebras with $R^c$ nuclear and which induces a monomorphism
on $\pi_*(-)$. In particular, $\pi_*(R^c)$ is torsion-free.
\begin{lem}\label{lem:vn}
Let $A$ be a connective $p$-local commutative $S$-algebra for
which $\pi_*(A)$ is torsion-free. Then there is a morphism of
commutative $S$-algebras $R_\infty\lra A$. In particular, the
natural morphism $R_\infty\lra R$ admits a factorisation
through any core $R^c\lra R$ for $R$.
\[
\xymatrix{
R_\infty \ar[r]\ar@/^12pt/[rr] & R^c \ar[r] & R
}
\]
\end{lem}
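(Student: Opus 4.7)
The plan is to build a morphism $R_\infty \to A$ one cell at a time, mirroring the inductive construction of $R_\infty$ from Section~\ref{sec:Rn}. I would start from the unique unit map $\eta_A\colon S = R_0 \to A$ of commutative $S$-algebras, and then inductively attempt to extend a morphism $g_{n-1}\colon R_{n-1}\to A$ to a morphism $g_n\colon R_n\to A$. Since $R_n$ is defined as the $E_\infty$ pushout $R_{n-1}/\!/\alpha_{[n]}$ along the attaching map $\tilde f_n\colon\mathbb{P}S^{2p^n-3}\to R_{n-1}$, the universal property identifies such an extension with a null-homotopy of the composite $S^{2p^n-3}\xrightarrow{f_n}R_{n-1}\xrightarrow{g_{n-1}}A$ as a map of $S$-modules; equivalently, we must show that the image $g_{n-1,*}(\alpha_{[n]})\in\pi_{2p^n-3}(A)$ vanishes.

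The heart of the argument is to exploit torsion-freeness of $\pi_*(A)$ together with the naturality of $E_\infty$ power operations along morphisms of commutative $S$-algebras. For $p$ odd, this naturality gives
\[
g_{n-1,*}(\alpha_{[n]}) \;=\; g_{n-1,*}\bigl(\beta\mathcal{P}^{p^{n-1}}u_{n-1}\bigr) \;=\; \beta\mathcal{P}^{p^{n-1}}\bigl(g_{n-1,*}(u_{n-1})\bigr),
\]
with the analogous identity using $\mathcal{P}^{2^n-1}$ when $p=2$. To invoke Theorem~\ref{thm:PowOps} (respectively Theorem~\ref{thm:PowOps2}) I first need its hypothesis: the image of $\alpha_1\in\pi_{2p-3}S$ (respectively $\eta\in\pi_1 S$) under the unit map is $p$-torsion (respectively $2$-torsion) in a torsion-free group, hence zero. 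The relevant theorem then tells us the operation is defined on $g_{n-1,*}(u_{n-1})$ and that $p\cdot\beta\mathcal{P}^{p^{n-1}}$ is trivial. So $g_{n-1,*}(\alpha_{[n]})$ is $p$-torsion in the torsion-free group $\pi_{2p^n-3}(A)$ and therefore vanishes. The desired extension $g_n$ exists.

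Passing to the homotopy colimit $R_\infty = \hocolim_n R_n$ assembles these compatible extensions into a morphism $R_\infty\to A$, proving the first claim. For the \emph{in particular} statement I apply what has just been shown to $A=R^c$: since $R^c\to R$ is monic on $\pi_*$ and $\pi_*(R)$ is torsion-free by the construction in Section~\ref{sec:Killtorsion}, $\pi_*(R^c)$ is also torsion-free, so a morphism $R_\infty\to R^c$ exists and composing with the core map gives the required factorisation of $R_\infty\to R$.

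The only subtle point I foresee is the very first step of the inductive argument, namely cleanly justifying that the obstruction to extending across a $\mathbb{P}$-cell pushout of commutative $S$-algebras is exactly the class of the underlying attaching map in $\pi_*(A)$ (with no additional higher obstructions from the $E_\infty$ structure), and that morphisms of commutative $S$-algebras strictly commute with the $\beta\mathcal{P}^{p^{n-1}}$ and $\mathcal{P}^{2^n-1}$ operations used here. Both facts are standard from \cite{EKMM} and \cite{LNM1176}, but they are the load-bearing inputs of the proof and deserve an explicit citation.
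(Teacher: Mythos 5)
Your proof is correct and is essentially the paper's argument (which is stated in a single sentence): the $E_\infty$ cells of $R_\infty$ are attached along torsion classes, so the obstructions to extending the unit $S\to A$ vanish when $\pi_*(A)$ is torsion-free. Note that the detour through naturality of power operations is not needed — Lemma~\ref{lem:Rn} already records that $\alpha_{[n]}$ has order $p$ in $\pi_*(R_{n-1})$, so its image in the torsion-free group $\pi_{2p^n-3}(A)$ vanishes directly.
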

\begin{proof}
Since our cellular construction of $R_\infty$ involves attaching
$E_\infty$ cells to kill torsion elements in homotopy, it is
straightforward to see that at each stage we can extend the unit
$S\lra A$, in the limit this gives a morphism $R_\infty\lra A$.
\end{proof}

As $R$ and more generally any core $R^c$ have torsion-free homotopy
concentrated in even degrees, standard arguments of~\cite{JFA:Blue}
show that there are morphisms of ring spectra $BP\lra R$ and
$BP\lra R^c$ associated with complex orientations with $p$-typical
formal group laws. Our earlier arguments show that these are rational
weak equivalences. Of course we have not shown that $BP\sim R$ even
as (ring) spectra. One way to prove this would be to produce any map
of spectra $R\lra BP$ that is an equivalence on the bottom cell, for
then the composition $BP\lra R\lra BP$ would be a weak equivalence,
therefore so would each of the maps $BP\lra R$ and $R\lra BP$. It
is tempting to conjecture that $R$ (or equivalently $R^c$) is always
weakly equivalent to $BP$, but we have no hard evidence for this
beyond what we have described above.

\appendix
\section{Toda brackets and Massey products}\label{App:Toda}

For the sake of completeness, we describe the kind of Toda
brackets and Massey products we use. Details of this material
can be developed in the spirit of the exposition of Toda
brackets by Whitehead~\cite{GW:Advances}.

\subsection*{Toda brackets in the homotopy of $R$-modules}

We will work with (left) $S$-modules in the sense
of~\cite{EKMM}. We will usually omit $S$ from notation,
for example $\wedge$ will denote $\wedge_S$ and so on.

Let $R$ be a commutative $S$-algebra and let $M$ be a
left $R$-module. We will require Toda brackets of the
following form. Let $\alpha\in\pi_aR$, $\beta\in\pi_bR$
and let $\gamma\in\pi_cM$, and suppose that
\[
\alpha\beta = 0 = \beta\gamma.
\]
Choosing representatives $f\:S^a\lra R$, $g\:S^b\lra R$,
$h\:S^c\lra M$, the maps

\smallskip
\[
\xymatrix{
S^{a+b}\ar[r]\ar@/^18pt/[rrr]^{fg} & S^a\wedge S^b\ar[r]_{f\wedge g}
                        & R\wedge R \ar[r]& R  &
S^{b+c}\ar[r]\ar@/^18pt/[rrr]^{gh} & S^b\wedge S^c\ar[r]_{g\wedge h}
                        & R\wedge M \ar[r]& M
}
\]
are null homotopic. Now choosing explicit null homotopies
\[
k\:D^{a+b+1} \lra R,\quad \ell\: D^{b+c+1}\lra M,
\]
we obtain maps

\smallskip
\[
\xymatrix{
D^{a+b+1}\wedge S^c\ar[rr]_(.6){k\wedge h}\ar@/^18pt/[rrr]^{kh} && R\wedge M\ar[r]
                       & M  &
S^a\wedge D^{b+c+1}\ar[rr]_(.6){f\wedge\ell}\ar@/^18pt/[rrr]^{f\ell} && S^b\wedge S^c\ar[r]
                        & M
}
\]
which agree on the boundary
$S^{a+b+c}\doteq S^{a+b}\wedge S^c\doteq S^{a}\wedge S^{b+c}$.
Therefore we obtain a map $S^{a+b+c+1}\lra M$ in the usual way
representing the bracket $\<\alpha,\beta,\gamma\>$.

\subsection*{Recollections on Massey products}

We follow the sign conventions of~\cite{SOK:Book}*{section~5.4}.

Let $(A,d)$ be a dga where $A=A^*$ is $\Z$-graded. If $W\in A^*$
is a homogeneous element, we set
\[
\bar{W} = (-1)^{1+\deg W}W.
\]
Suppose that $X,Y,Z\in A^*$ are homogeneous elements which
are cycles so that the Massey product $\<[X],[Y],[Z]\>$ is
defined, \ie, $[X][Y]=0=[Y][Z]$. Choose $U,V\in A^*$ so
that
\[
d(U) = \bar{X}Y, \quad d(V) = \bar{Y}Z.
\]
Then $d(\bar{X}V + \bar{U}Z) = 0$ and
\[
[\bar{X}V + \bar{U}Z] \in \<[X],[Y],[Z]\>\subseteq H^*(A,d).
\]
The indeterminacy is the subset
\[
[X]\.H^*(A,d) + \.[Z]\.H^*(A,d) \subseteq H^*(A,d).
\]

\begin{bibdiv}
\begin{biblist}

\bib{JFA:Blue}{book}{
   author={Adams, J. F.},
   title={Stable homotopy and generalised homology},
   series={Chicago Lectures in Mathematics},
   note={Reprint of the 1974 original},
   publisher={University of Chicago Press},
   date={1995},
}

\bib{JFA&AL:Hurewicz}{article}{
   author={Adams, J. F.},
   author={Liulevicius, A.},
   title={The Hurewicz homomorphism for $\mathrm{MU}$
   and $\mathrm{BP}$},
   journal={J. London Math. Soc. (2)},
   volume={5},
   date={1972},
   pages={539--545},
}

\bib{TAQ}{article}{
   author={Baker, A.},
   author={Gilmour, H.},
   author={Reinhard, P.},
   title={Topological Andr\'e-Quillen homology
   for cellular commutative $S$-algebras},
   journal={Abh. Math. Semin. Univ. Hambg.},
   volume={78},
   date={2008},
   number={1},
   pages={27--50},
}

\bib{AB&AL:ASS}{article}{
   author={Baker, A.},
   author={Lazarev, A.},
   title={On the Adams spectral sequence for
   $R$-modules},
   journal={Algebr. Geom. Topol.},
   volume={1},
   date={2001},
   pages={173--199},
}

\bib{AJB&JPM}{article}{
    author={Baker, A. J.},
    author={May, J. P.},
    title={Minimal atomic complexes},
   journal={Topology},
    volume={43},
      date={2004},
    number={2},
     pages={645--665},
}

\bib{MB&MM:BP}{article}{
    author={Basterra, M.},
    author={Mandell, M. A.},
    title={The multiplication on $BP$},
    journal= {J. Topology},
    volume={6},
    date={2013},
    pages={285\ndash 310},
}

\bib{LNM1176}{book}{
   author={Bruner, R. R.},
   author={May, J. P.},
   author={McClure, J. E.},
   author={Steinberger, M.},
   title={$H_\infty$ ring spectra and their applications},
   series={Lect. Notes in Math.},
   volume={1176},
   date={1986},
}

\bib{LNM533}{book}{
   author={Cohen, F. R.},
   author={Lada, T. J.},
   author={May, J. P.},
   title={The homology of iterated loop spaces},
   series={Lect. Notes in Math.},
   volume={533},
   date={1976},
}

\bib{WGD:EMSS-Stcgce}{article}{
   author={Dwyer, W. G.},
   title={Strong convergence of the Eilenberg-Moore
   spectral sequence},
   journal={Topology},
   volume={13},
   date={1974},
   pages={255--265},
}


\bib{EKMM}{book}{
   author={Elmendorf, A. D.},
   author={\Kriz, I.},
   author={Mandell, M. A.},
   author={May, J. P.},
   title={Rings, modules, and algebras in stable
   homotopy theory},
   series={Mathematical Surveys and Monographs},
   volume={47},
   date={1997 (with an appendix by M.~Cole)},
}

\bib{MH&TL}{article}{
   author={Hill, M.},
   author={Lawson, T.},
   title={Automorphic forms and cohomology theories
   on Shimura curves of small discriminant},
   journal={Adv. Math.},
   volume={225},
   date={2010},
   number={2},
   pages={1013--1045},
}

\bib{HKM}{article}{
   author={Hu, P.},
   author={\Kriz, I.},
   author={May, J. P.},
   title={Cores of spaces, spectra, and $E_\infty$
   ring spectra},
   journal={Homology Homotopy Appl.},
   volume={3},
   date={2001},
   number={2},
   pages={341--354},
}

\bib{TH:THH}{article}{
   author={Hunter, T.},
   title={On the homology spectral sequence for
   topological Hochshild homology},
   journal={Trans. Amer. Math. Soc.},
   volume={348},
   date={1996},
   number={10},
   pages={3941--3953},
}

\bib{NJ&JN}{article}{
   author={Johnson, N.},
   author={Noel, J.},
   title={For complex orientations preserving
   power operations, $p$-typicality is atypical},
   journal={Topology Appl.},
   volume={157},
   date={2010},
   number={14},
   pages={2271--2288},
}

\bib{SOK:Book}{book}{
   author={Kochman, S. O.},
   title={Bordism, stable homotopy and Adams
   spectral sequences},
   series={Fields Institute Monographs},
   volume={7},
   publisher={American Mathematical Society},
   date={1996},
}

\bib{IK:BP}{misc}{
    author={\Kriz, I.},
    title={Towers of $E_\infty$ ring spectra with
    an application to $BP$},
    series={unpublished preprint},
}

\bib{NJK:Transfer}{article}{
   author={Kuhn, N. J.},
   title={The transfer and James-Hopf invariants},
   journal={Math. Z.},
   volume={196},
   date={1987},
   pages={391-–405},
}

\bib{NJK&JMcC:HF2InfLoopSpcs}{article}{
   author={Kuhn, N. J.},
   author={McCarty, J.},
   title={The mod~$2$ homology of infinite loopspaces},
   journal={Algebr. Geom. Topol.},
   volume={13},
   date={2013},
   pages={687-–745},
}
	
\bib{TL&NN}{article}{
   author={Lawson, T.},
   author={Naumann, N.},
   title={Commutativity conditions for truncated
   Brown-Peterson spectra of height $2$},
   journal={J. Topol.},
   volume={5},
   date={2012},
   number={1},
   pages={137--168},
}

\bib{LNM1213}{book}{
   author={Lewis, L. G.},
   author={May, J. P.},
   author={Steinberger, M.},
   author={McClure, J. E.},
   title={Equivariant stable homotopy theory},
   series={Lect. Notes in Math.},
   volume={1213},
   date={1986},
}

\bib{HL&IM}{article}{
   author={Ligaard, H.},
   author={Madsen, I.},
   title={Homology operations in the Eilenberg-Moore
   spectral sequence},
   journal={Math. Z.},
   volume={143},
   date={1975},
   pages={45--54},
}

\bib{Matsumura}{book}{
   author={Matsumura, H.},
   title={Commutative Ring Theory},
   series={Cambridge Studies in Advanced Mathematics},
   volume={8},
   edition={2},
   publisher={Cambridge University Press},
   date={1989},
}

\bib{JPM:StOps}{article}{
   author={May, J. P.},
   title={A general algebraic approach to Steenrod
   operations},
   journal={Lect. Notes in Math.},
   volume={168},
   date={1970},
   pages={153--231},
}
		
\bib{JPM:HomOps}{article}{
   author={May, J. P.},
   title={Homology operations on infinite loop spaces},
   journal={Proc. Sympos. Pure Math.},
   volume={XXII},
   date={1971},
   pages={171--185},
}

\bib{JPM:Hinfty}{article}{
   author={May, J. P.},
   title={$H_\infty$ ring spectra and their applications},
   journal={Proc. Sympos. Pure Math., XXXII},
   date={1978},
   pages={229--243},
}

\bib{JPM:Einfty?}{article}{
   author={May, J. P.},
   title={What precisely are $E_\infty$ ring spaces
   and $E_\infty$ ring spectra?},
   series={Geom. Topol. Monogr.},
   volume={16},
   date={2009},
   pages={215--282},
}

\bib{LNM577}{book}{
   author={May, J. P.},
   author={Quinn, F.},
   author={Ray, N.},
   author={Tornehave. J.},
   title={$E_\infty$ ring spaces and $E_\infty$
   ring spectra},
   series={Lect. Notes in Math.},
   volume={577},
   date={1977},
}

\bib{JM:SSbook}{book}{
   author={McCleary, J.},
   title={A user's guide to spectral sequences},
   series={Camb. Studies in Adv. Math.},
   volume={58},
   edition={2},
   publisher={Cambridge University Press},
   date={2001},
}

\bib{RJM:GpReps&ASS}{article}{
   author={Milgram, R. J.},
   title={Group representations and the Adams
   spectral sequence},
   journal={Pacific J. Math.},
   volume={41},
   date={1972},
   pages={157--182},
}

\bib{M&M}{article}{
   author={Milnor, J. W.},
   author={Moore, J. C.},
   title={On the structure of Hopf algebras},
   journal={Ann. of Math. (2)},
   volume={81},
   date={1965},
   pages={211--264},
}

\bib{SP:BP}{article}{
   author={Priddy, S.},
   title={A cellular construction of $BP$ and
   other irreducible spectra},
   journal={Math. Z.},
   volume={173},
   date={1980},
   number={1},
   pages={29--34},
}

\bib{BR:BP-En}{article}{
   author={Richter, B.},
   title={A lower bound for coherences on the
   Brown-Peterson spectrum},
   journal={Algebr. Geom. Topol.},
   volume={6},
   date={2006},
   pages={287--308 (electronic)},
}
	
\bib{GW:Advances}{book}{
   author={Whitehead, G. W.},
   title={Recent advances in homotopy theory},
   note={Conference Board of the Mathematical
   Sciences Regional Conference Series in
   Mathematics},
   volume={5},
   date={1970},
}

\end{biblist}
\end{bibdiv}

\end{document}